 \newtheorem{example}{Example}
 \journal{...}
 \newtheorem{theorem}{Theorem}[section]
 \newtheorem{lemma}{Lemma}[section]
 \newtheorem{cor}{Corollary}[section]
 \newtheorem{remark}{Remark}[section]
 \theoremstyle{definition}
\begin{document}

\begin{frontmatter}

\title{Better numerical approximation by  Durrmeyer type operators}

\author[1]{Ana-Maria Acu}
\author[2]{Vijay Gupta}
\author[3]{Gancho Tachev}
\author[]{\hspace{10.5cm} { \bf Dedicated to professor Heiner Gonska on the occasion of his 70th birthday}}
\address[1]{Lucian Blaga University of Sibiu, Department of Mathematics and Informatics, Str. Dr. I. Ratiu, No.5-7, RO-550012  Sibiu, Romania, e-mail: anamaria.acu@ulbsibiu.ro}
\address[2]{ Netaji Subhas Institute of Technology, Department of Mathematics, Sector 3 Dwarka, New Delhi-110078, India, e-mail: vijaygupta2001@hotmail.com}
\address[3]{University of Architecture Civil Engineering and Geodesy, Department of Mathematics,
	BG-1046, Sofia, Bulgaria, e-mail: gtt\_fte@uacg.bg}

\begin{abstract}
The main object of this paper is to construct new Durrmeyer type operators which have better features than the classical one. Some results concerning the rate of convergence and asymptotic formulas of the new operator are given. Finally, the theoretical results are analyzed by numerical examples.
\end{abstract}

\begin{keyword}
	Approximation by polynomials, Durrmeyer polynomials, Voronovskaja type theorem
	\MSC[2010] 41A25, 41A36.
\end{keyword}

\end{frontmatter}

\section{Introduction}

In 1912, Bernstein \cite{Bernstein} defined the Bernstein polynomials in order to prove Weierstrass's fundamental theorem. Because of their remarkable and notable approximation properties, Bernstein polynomials attracted the most interest and were studied  by a number of authors. For more details on this topic we can refer the readers to excellent monographs \cite{G1} and \cite{G2}. The order of approximation of Bernstein operators has been studied in great detail for a long time. Firstly, T. Popoviciu (\cite{P43,P44}) gave a solution in this direction considering the first modulus of continuity. Later these results have been improved by Lorentz \cite{32r3} and Sikkema \cite{58r3} and the recent monograph of  Bustamante \cite{bust}. New technique introduced in order to study the order of approximation was given by Esser \cite{11r3}, who used the first estimates with $\omega_2$. The order of approximation of Bernstein operators by $\omega_2$ has been established by Gonska \cite{21r2}, Gonska $\&$ Zhou \cite{27r2}, Kacs\'o \cite{30r2} and P\u alt\u anea \cite{50r2}. An asymptotic error term of the Bernstein operators was first given by Voronovskaja \cite{T9}. Later, this result was extended by the authors of \cite{Gavrea,2,3, G1, G2,5}.

 For $f\in C[0,1]$, the Bernstein operators of degree $n$ with respect to $f$ are defined by
\begin{eqnarray*}
B_n(f,x) = \sum\limits_{k=0}^{n} p_{n,k}(x) \ f\left(\frac{k}{n} \right) , \ \ x \in [0,1],
\end{eqnarray*}
where $p_{n,k}(x)=\displaystyle \binom{n}{k} x^k (1-x)^{n-k}, \ \ k=0,1, \ldots, n, $
and  $p_{n,k}(x)=0,$ if $k<0$ or $k>n.$
It is well known that the fundamental polynomial verify
\begin{eqnarray}\label{e2}
p_{n,k}(x) = (1-x) \ p_{n-1,k}(x) + x \ p_{n-1,k-1}(x), \ \ 0<k< n.
\end{eqnarray}

The most attention arises of how to introduce new modification of Bernstein operators presenting better convergence. In a recent paper, H. Khosravian-Arab et al. \cite{1} have introduced a modified Bernstein operators to improve the degree of approximation as follows:
\begin{eqnarray}\label{ne1}
B_n^{M,1}(f,x) = \sum\limits_{k=0}^{n} p_{n,k}^{M,1}(x) \ f\left(\frac{k}{n} \right) , \ \ x \in [0,1],
\end{eqnarray}
\begin{align}\label{e3}
p_{n,k}^{M,1}(x) &= a(x,n) \ p_{n-1,k}(x) + a(1-x,n) \ p_{n-1,k-1}(x), 1\leq k\leq n-1,\\
p_{n,0}^{M,1}(x) &= a(x,n)(1-x)^{n-1},\quad p_{n,n}^{M,1}(x) = a(1-x,n)x^{n-1},\notag
\end{align}
and
\begin{eqnarray*}\label{e4}
a(x,n) = a_1(n) \ x + a_0(n), \,  n=0,1, \ldots,
\end{eqnarray*}
where $a_0(n)$ and $a_1(n)$ are two unknown sequences which are determined in appropriate way. For $a_1(n)=-1,$ $a_0(n)=1$, obviously, (\ref{e3}) reduces to (\ref{e2}).

The classical Durrmeyer operators are the integral modification of Bernstein operators so as to approximate Lebesque integrable functions defined on the interval $[0,1]$. These operators were introduced by Durrmeyer \cite{1a} and, independently, by Lupa\c s \cite{2a}. From the approximation point of view Durrmeyer operators attracted attention and have been extensively studied by Derrienic \cite{8r3}, Ditzian \cite{10r3}, Ivanov \cite{22r3} and several other authors. In this paper the Durrmeyer variants of the Bernstein operators modified by H. Khosravian-Arab et al.  \cite{1} will be introduced and studied.

\section{Approximation by Durrmeyer operators of order I}
In this section we define a Durrmeyer variant of  the modified
Bernstein operators (\ref{ne1}) as follows:

\begin{eqnarray}\label{X}
D_n^{M,1}(f,x) =(n+1) \sum\limits_{k=0}^{n} p_{n,k}^{M,1}(x)  \int\limits_{0}^{1} p_{n,k}(t) \ f(t) \, dt , \ \ x \in [0,1].
\end{eqnarray}

\begin{lemma}\label{l1.1}
The moments of the Durrmeyer operators $D_n^{M,1}$ are given by	
	\begin{itemize}
		\item[i)]$D_n^{M,1}(e_0;x)=2a_0(n)+a_1(n);$
		\item[ii)]$D_n^{M,1}(e_1;x)=\left(a_1(n)+2a_0(n)\right)x+\displaystyle\frac{(1-2x)(3a_0(n)+2a_1(n))}{n+2};$
		\item[iii)]$D_n^{M,1}(e_2;x)=\left(a_1(n)+2a_0(n)\right)x^2-\left[(8x^2-5x)a_0(n)+(5x^2-3x)a_1(n)\right]\displaystyle\frac{2n}{(n+2)(n+3)}\\   \, \, \, \, -2\displaystyle\frac{(x^2+5x-3)a_1(n)+(4x^2+5x-4)a_0(n)}{(n+2)(n+3)},$
	\end{itemize}
where $e_i(x)=x^i,\,i=0,1,2$.
\end{lemma}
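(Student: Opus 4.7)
The plan is to reduce everything to two ingredients: the Euler--Beta identities
$$(n+1)\int_0^1 t^m\, p_{n,k}(t)\,dt \;=\; \begin{cases} 1, & m=0,\\ (k+1)/(n+2), & m=1,\\ (k+1)(k+2)/[(n+2)(n+3)], & m=2,\end{cases}$$
and the first three moments of the classical Bernstein operator of degree $n-1$, namely $\sum_{k=0}^{n-1} p_{n-1,k}(x)=1$, $\sum_{k=0}^{n-1} k\,p_{n-1,k}(x)=(n-1)x$ and $\sum_{k=0}^{n-1} k^2\,p_{n-1,k}(x)=(n-1)(n-2)x^2+(n-1)x$. Inserting the Beta identity into \eqref{X} turns $D_n^{M,1}(e_m;x)$ into a finite linear combination of moments $\sum_k k^j\,p_{n,k}^{M,1}(x)$, so the task is reduced to computing polynomial moments of the hybrid basis $p_{n,k}^{M,1}$.

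First I would observe that, under the convention $p_{n-1,-1}(x)=p_{n-1,n}(x)=0$, the two boundary values in \eqref{e3} satisfy the same interior formula, so that $p_{n,k}^{M,1}(x)=a(x,n)\,p_{n-1,k}(x)+a(1-x,n)\,p_{n-1,k-1}(x)$ uniformly for $0\le k\le n$. Substituting this into each of the three moment sums produces one term with prefactor $a(x,n)$ acting on $\sum_k P(k)\,p_{n-1,k}(x)$ and one with prefactor $a(1-x,n)$ acting on $\sum_k P(k)\,p_{n-1,k-1}(x)$, where $P(k)\in\{1,\,k+1,\,(k+1)(k+2)\}$. In the second sum the index shift $j=k-1$ replaces $P(k)$ by $P(j+1)$, and both sums then collapse to evaluations of $B_{n-1}(Q;\cdot)$ on explicit polynomials $Q$ of degree at most two, which are covered by the Bernstein moment formulas above.

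The finishing step is to expand $a(x,n)=a_1(n)x+a_0(n)$ and $a(1-x,n)=-a_1(n)x+a_0(n)+a_1(n)$, add the two halves, and collect powers of $x$. For $m=0$ both inner sums equal $1$, so $D_n^{M,1}(e_0;x)=a(x,n)+a(1-x,n)=2a_0(n)+a_1(n)$, which is (i). For $m=1$ a straightforward expansion yields a polynomial linear in $x$ that rearranges into the form of (ii). The main obstacle is the bookkeeping for $m=2$: several $x^2$-contributions cancel between the two halves (as already happens for $m=1$), and the remaining terms must be carefully separated into the group proportional to $2n/[(n+2)(n+3)]$ and the group proportional to $1/[(n+2)(n+3)]$ in order to match the particular grouping used in statement (iii). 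No conceptual difficulty is involved; only the algebra must be organised with care.
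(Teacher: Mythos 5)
Your proposal is correct: the Beta-integral identities $(n+1)\int_0^1 t^m p_{n,k}(t)\,dt$, the observation that the boundary weights $p_{n,0}^{M,1}$, $p_{n,n}^{M,1}$ fit the interior formula under the convention $p_{n-1,-1}=p_{n-1,n}=0$, the index shift, and the degree-$(n-1)$ Bernstein moments do reproduce items (i)--(iii) exactly (I checked the $m=1$ and $m=2$ bookkeeping against the stated grouping). The paper states this lemma without proof, and your computation is precisely the standard direct calculation that is implicitly intended, so it fills in the omitted details rather than deviating from the paper's approach.
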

\begin{lemma}\label{lema2}
	The central moments of the operators $D_n^{M,1}$ are given by
	\begin{itemize}
		\item[i)]$D_n^{M,1}(t-x;x)=\displaystyle\frac{(1-2x)(2a_1(n)+3a_0(n))}{n+2};$
		\item[ii)] $D_n^{M,1}((t-x)^2;x)=\displaystyle\frac{2(3a_1(n)+4a_0(n)-11a_1(n)x+14x^2a_0(n)+11a_1(n)x^2-14a_0(n)x)}{(n+2)(n+3)}\\+\displaystyle\frac{2(1-x)x(2a_0(n)+a_1(n))n}{(n+2)(n+3)};$
		\item[iii)]$D_n^{M,1}((t-x)^4;x)=\displaystyle\frac{12}{(n+2)(n+3)(n+4)(n+5)}\left\{(1-x)^2x^2(2a_0(n)+a_1(n))n^2\right.\\
		+3(1-x)x(15a_1(n)x^2+22x^2a_0(n)-15a_1(n)x-22a_0(n)x+6a_0(n)+4a_1(n))n\\
		+10a_1(n)+12a_0(n)-68a_1(n)x+124x^4a_0(n)+202x^2a_0(n)+114x^4a_1(n)+182a_1(n)x^2\\
		\left. -78a_0(n)x-228a_1(n)x^3-248a_0(n)x^3\right\}.$
	\end{itemize}
\end{lemma}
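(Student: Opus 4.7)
My approach is to derive each central moment from the raw moments via the binomial expansion
\[
(t-x)^j=\sum_{i=0}^{j}\binom{j}{i}(-x)^{j-i}t^{i},
\]
together with linearity of $D_n^{M,1}$ in its first argument.

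For part (i), I would write $D_n^{M,1}(t-x;x)=D_n^{M,1}(e_1;x)-x\,D_n^{M,1}(e_0;x)$. Substituting (i) and (ii) of Lemma~\ref{l1.1}, the polynomial part $(a_1(n)+2a_0(n))x$ cancels exactly against $-x(2a_0(n)+a_1(n))$, leaving only $\frac{(1-2x)(3a_0(n)+2a_1(n))}{n+2}$, which is the asserted formula. For part (ii) the same strategy gives $D_n^{M,1}((t-x)^2;x)=D_n^{M,1}(e_2;x)-2x\,D_n^{M,1}(e_1;x)+x^2\,D_n^{M,1}(e_0;x)$; substituting the three formulas from Lemma~\ref{l1.1}, reducing over the common denominator $(n+2)(n+3)$ and collecting by powers of $x$ and of $n$ yields the stated expression. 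This is pure bookkeeping.

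For part (iii) a new difficulty appears: the fourth central moment requires $D_n^{M,1}(e_3;x)$ and $D_n^{M,1}(e_4;x)$, which are not stated in Lemma~\ref{l1.1}. I would compute these two raw moments by the same method presumably used to prove Lemma~\ref{l1.1}: use $p_{n,k}^{M,1}(x)=a(x,n)\,p_{n-1,k}(x)+a(1-x,n)\,p_{n-1,k-1}(x)$ with $a(x,n)=a_1(n)x+a_0(n)$, interchange summation and integration, and evaluate $\int_0^{1}p_{n,k}(t)\,t^{j}\,dt=\frac{n!}{(n+j+1)!}\cdot\frac{(k+j)!}{k!}$ by the Beta-integral identity. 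After an index shift the sums $\sum_{k}p_{n-1,k}(x)\frac{(k+j)!}{k!}$ reduce to polynomials in $x$ of degree at most $j$ with coefficients that are falling factorials in $n-1$. Then $D_n^{M,1}((t-x)^4;x)=D_n^{M,1}(e_4;x)-4x\,D_n^{M,1}(e_3;x)+6x^2\,D_n^{M,1}(e_2;x)-4x^3\,D_n^{M,1}(e_1;x)+x^4\,D_n^{M,1}(e_0;x)$ is assembled over the common denominator $(n+2)(n+3)(n+4)(n+5)$ predicted by the target formula.

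The main obstacle is the sheer algebraic bulk of part (iii). One must simultaneously keep track of powers of $n$ (the $n^2$, $n^1$ and $n^0$ parts, matching the three blocks of the stated formula), powers of $x$ up to $x^4$, and the two unknown sequences $a_0(n),a_1(n)$. I would organize the computation block-by-block according to the $n$-degree of the coefficient, verify the leading term $(1-x)^2 x^2(2a_0(n)+a_1(n))n^2$ first (this is the dominant contribution and should agree with the classical Durrmeyer fourth moment once $a_1(n)=-1,\,a_0(n)=1$), and then compute the lower-order blocks in turn. A symbolic-algebra check at the end is advisable because of the opportunity for sign or coefficient slips, but no conceptual new ingredient beyond the Beta-integral evaluation and linearity is required.
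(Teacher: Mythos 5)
Your proposal is correct and follows essentially the same route the paper implicitly relies on: the lemma is a direct computation, obtained by expanding $(t-x)^j$ binomially, using the raw moments (those of Lemma~\ref{l1.1} plus $D_n^{M,1}(e_3;x)$, $D_n^{M,1}(e_4;x)$ computed from the representation $p_{n,k}^{M,1}(x)=a(x,n)p_{n-1,k}(x)+a(1-x,n)p_{n-1,k-1}(x)$ and the Beta-integral identity), and collecting terms over the common denominators. Your spot checks (the cancellation of the polynomial parts in (i)--(ii) and the leading term $12x^2(1-x)^2(2a_0(n)+a_1(n))n^2$ reducing to the classical Durrmeyer fourth moment when $a_0(n)=1$, $a_1(n)=-1$) are the right consistency tests, and no further idea is needed.
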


Note that throughout the paper we will assume that the
 sequences $a_i(n), i=0,1$ verify the condition
\begin{equation}\label{A}
2a_0(n)+a_1(n)=1.
\end{equation}
This assumption on the sequences  $a_i(n), i=0,1$  was made in order to study the uniformly convergence.
In the following we will consider these two cases for unknown sequences $a_0(n)$ and $a_1(n)$:

\noindent {\bf Case 1.} Let
\begin{equation}\label{Y1}
a_0(n)\geq 0, a_0(n)+a_1(n)\geq 0.
\end{equation}
Using condition (\ref{A})  we obtain $0\leq a_0(n)\leq 1$ and $-1\leq a_1(n)\leq 1$, namely the sequences are bounded. In this case the operator (\ref{X})  is positive.

\noindent{\bf Case 2.} Let
\begin{equation}\label{Y2}
a_0(n)<0 \textrm{ or } a_1(n)+a_0(n)<0.
\end{equation}
If $a_0(n)<0$, then $a_1(n)+a_0(n)>1$ and if $a_1(n)+a_0(n)<0$, then $a_0(n)>1$. In this case the operator (\ref{X})  is not positive.

\begin{theorem}\label{t1.1}
 Let $f\in C[0,1]$. If $a_1(n)$, $a_0(n)$  verify the conditions (\ref{A}) and (\ref{Y1}), then
\begin{align*}
&\displaystyle\lim_{n\to\infty}D_n^{M,1}(f;x)=f(x),
\end{align*}
uniformly on $[0,1]$.
\end{theorem}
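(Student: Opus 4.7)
The plan is to apply the Bohman--Korovkin theorem, since under hypothesis (\ref{Y1}) the operator $D_n^{M,1}$ is linear and positive, as already noted in the paper. It therefore suffices to show that $D_n^{M,1}(e_i;x)\to e_i(x)$ uniformly on $[0,1]$ for $i=0,1,2$, and then uniform convergence on the whole of $C[0,1]$ follows.

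First I would read off from Lemma~\ref{l1.1}(i) together with the normalisation (\ref{A}) the identity $D_n^{M,1}(e_0;x)=2a_0(n)+a_1(n)=1$, so the $e_0$ case holds exactly. For $e_1$, Lemma~\ref{l1.1}(ii) combined with (\ref{A}) gives
\begin{equation*}
D_n^{M,1}(e_1;x)=x+\frac{(1-2x)(3a_0(n)+2a_1(n))}{n+2}.
\end{equation*}
Under Case~1 the sequences $a_0(n),a_1(n)$ are bounded (explicitly $0\le a_0(n)\le 1$, $-1\le a_1(n)\le 1$, as stated in the paper), so the numerator is uniformly bounded in $n$ and $x$, and the correction term tends to $0$ uniformly on $[0,1]$. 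Analogously, inserting (\ref{A}) into Lemma~\ref{l1.1}(iii) gives an expression of the form $x^2+R_n(x)$, where $R_n(x)$ is a polynomial in $x$ whose coefficients are bounded linear combinations of $a_0(n),a_1(n)$ divided by $(n+2)(n+3)$; the boundedness of the sequences ensures $\|R_n\|_\infty=O(1/n)$, so $D_n^{M,1}(e_2;x)\to x^2$ uniformly.

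Having verified convergence on the Korovkin test system $\{e_0,e_1,e_2\}$, I would conclude by quoting the Bohman--Korovkin theorem: for any $f\in C[0,1]$, the sequence $D_n^{M,1}(f;x)$ converges to $f(x)$ uniformly on $[0,1]$.

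There is no real obstacle here: the only point that requires care is to make sure that hypothesis (\ref{Y1}), together with the normalisation (\ref{A}), indeed forces $a_0(n)$ and $a_1(n)$ to be uniformly bounded in $n$, so that all the $1/(n+2)$ and $1/(n+2)(n+3)$ remainders obtained from Lemma~\ref{l1.1} decay uniformly in $x$. Once this boundedness is recorded, the rest is immediate from Korovkin.
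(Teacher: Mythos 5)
Your proposal is correct and follows essentially the same route as the paper: positivity under (\ref{Y1}), boundedness of $a_0(n),a_1(n)$ from (\ref{A}) and (\ref{Y1}), and the Bohman--Korovkin theorem applied to the moments in Lemma~\ref{l1.1}. You merely spell out the test-function verification that the paper leaves implicit, which is fine.
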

\begin{proof} Since the sequences $a_1(n)$, $a_0(n)$  verify the conditions (\ref{A}) and (\ref{Y1}), it follows that these sequences are bounded. Using the well known Korovkin Theorem and Lemma \ref{l1.1},  the uniform convergence of the operators $D_n^{M,1}$ is proved.\end{proof}

 In order to prove this result for the Case 2, we recall the extended form of the Korovkin Theorem:
\begin{theorem}\label{t10}
\cite[Theorem 10]{1} Let $0<h\in C[a,b]$ be a function and suppose that $(L_n)_{n\geq 1}$ is a sequence of positive linear operators such that
$\displaystyle\lim_{n\to\infty} L_n(e_i)=he_i, i=0,1,2$, uniformly on [a,b]. Then for given function $f\in C[a,b]$ we have $\displaystyle\lim_{n\to\infty} L_n(f)=hf$ uniformly on $[a,b]$.
\end{theorem}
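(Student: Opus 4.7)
The plan is to adapt the classical Bohman--Korovkin argument so that it delivers convergence to $hf$ rather than to $f$. First I would fix $f\in C[a,b]$ and decompose
\[
L_n(f;x) - h(x)f(x) = L_n\bigl(f(t)-f(x);x\bigr) + f(x)\bigl[L_n(e_0;x) - h(x)\bigr].
\]
The case $i=0$ of the hypothesis says $L_n(e_0;x)\to h(x)$ uniformly on $[a,b]$, so the second summand tends uniformly to zero because $f$ is bounded.

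For the first summand I would invoke uniform continuity of $f$: given $\varepsilon>0$, pick $\delta>0$ such that $|t-x|<\delta$ implies $|f(t)-f(x)|<\varepsilon$, and then use the standard quadratic majorant
\[
|f(t)-f(x)| \leq \varepsilon + \frac{2\|f\|_\infty}{\delta^2}(t-x)^2,\qquad t,x\in[a,b].
\]
Positivity of $L_n$ yields
\[
\bigl|L_n\bigl(f(t)-f(x);x\bigr)\bigr| \leq \varepsilon\, L_n(e_0;x) + \frac{2\|f\|_\infty}{\delta^2}\,L_n\bigl((t-x)^2;x\bigr).
\]

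The key step is to show that $L_n((t-x)^2;x)\to 0$ uniformly on $[a,b]$. Expanding $(t-x)^2 = e_2(t) - 2x\,e_1(t) + x^2 e_0(t)$ and applying the three Korovkin hypotheses term by term gives
\[
L_n\bigl((t-x)^2;x\bigr) \longrightarrow h(x)x^2 - 2x\cdot h(x)\,x + x^2 h(x) = 0
\]
uniformly on $[a,b]$. Since $h$ is continuous on the compact $[a,b]$ and $L_n(e_0)\to h$ uniformly, the family $\{L_n(e_0;x)\}_n$ is uniformly bounded by some constant $C$. Assembling the pieces yields
\[
|L_n(f;x) - h(x)f(x)| \leq \varepsilon C + \frac{2\|f\|_\infty}{\delta^2}\,L_n\bigl((t-x)^2;x\bigr) + \|f\|_\infty\,|L_n(e_0;x)-h(x)|,
\]
whose right-hand side tends to $\varepsilon C$ as $n\to\infty$ and then to $0$ as $\varepsilon\to 0$.

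I do not foresee a serious obstacle: this is the classical Korovkin proof with $hf$ substituted for $f$, and the only subtlety is preserving uniformity in $x$. That uniformity is automatic, because every hypothesis is already uniform on the compact $[a,b]$ and the majorising quantities $h$ and $L_n(e_0)$ are consequently uniformly bounded there.
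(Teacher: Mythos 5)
Your proof is correct: the decomposition $L_n(f;x)-h(x)f(x)=L_n(f(t)-f(x);x)+f(x)[L_n(e_0;x)-h(x)]$, the quadratic majorant from uniform continuity, and the expansion of $L_n((t-x)^2;x)$ in terms of $L_n(e_i;x)$, $i=0,1,2$, together give the uniform convergence to $hf$, and the uniformity is preserved exactly as you argue since $x$, $x^2$, $h$ and (eventually) $L_n(e_0)$ are uniformly bounded on $[a,b]$. Note that the paper itself gives no proof of this statement --- it is only recalled as Theorem 10 of the cited reference by Khosravian-Arab, Dehghan and Eslahchi --- so there is no in-paper argument to compare with; yours is the standard Bohman--Korovkin proof adapted to the limit function $h$, which is precisely the expected route.
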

\begin{theorem}\label{t1.3}
Let $f\in C[0,1]$. Then for all bounded sequences $a_1(n)$ and $a_0(n)$ that satisfy the conditions (\ref{A}) and (\ref{Y2}), we have
\begin{align*}
&\displaystyle\lim_{n\to\infty}D_n^{M,1}(f;x)=f(x),
\end{align*}
uniformly on $[0,1]$.
\end{theorem}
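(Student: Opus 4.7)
The plan is to circumvent the loss of positivity in Case 2 (which blocks the Korovkin route used for Theorem~\ref{t1.1}) by establishing two things: a uniform operator-norm bound on the family $\{D_n^{M,1}\}$, and uniform convergence on a dense subspace of $C[0,1]$. A three-$\varepsilon$ argument then extends convergence to the whole space.

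For the norm bound, I would start from the recurrence (\ref{e3}) together with the boundary formulas for $p_{n,0}^{M,1}$ and $p_{n,n}^{M,1}$: the triangle inequality and the partition-of-unity identity $\sum_{j=0}^{n-1} p_{n-1,j}(x) = 1$ yield
\[
\sum_{k=0}^{n} \bigl|p_{n,k}^{M,1}(x)\bigr| \leq |a(x,n)| + |a(1-x,n)|,
\]
and the boundedness of $a_0(n), a_1(n)$ makes the right-hand side dominated by a constant $C$ independent of $n$ and $x\in[0,1]$. Combined with $(n+1)\int_0^1 p_{n,k}(t)\,dt = 1$, this gives $\|D_n^{M,1} f\|_\infty \leq C\|f\|_\infty$ for every $f\in C[0,1]$.

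For convergence on polynomials, I would use condition (\ref{A}) together with the classical identity $p'_{n,k}(x) = n[p_{n-1,k-1}(x) - p_{n-1,k}(x)]$ to derive the decomposition
\[
D_n^{M,1}(f;x) = \widehat D_n(f;x) - \frac{(a_1(n)+1)(x-1/2)}{n}\, \frac{d}{dx}\widehat D_n(f;x),
\]
where $\widehat D_n$ denotes the classical Durrmeyer operator (obtained by replacing $p_{n,k}^{M,1}$ with $p_{n,k}$ in (\ref{X})). The underlying algebraic fact is $p_{n,k}^{M,1}(x) - p_{n,k}(x) = (a_1(n)+1)(x-1/2)[p_{n-1,k}(x) - p_{n-1,k-1}(x)]$, immediate once one writes $a_0(n) = (1-a_1(n))/2$. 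For any polynomial $p$ of fixed degree, both $\widehat D_n(p)$ and its derivative converge uniformly to $p$ and $p'$ by the classical theory of Durrmeyer operators, so the correction term is $O(1/n)$ and $D_n^{M,1}(p)\to p$ uniformly. Given $f\in C[0,1]$ and $\varepsilon>0$, choosing a polynomial $p$ with $\|f-p\|_\infty<\varepsilon$ and applying the triangle inequality yields $\|D_n^{M,1}(f)-f\|_\infty \leq (C+1)\varepsilon + \|D_n^{M,1}(p)-p\|_\infty$, whence $D_n^{M,1}(f)\to f$ uniformly by letting $n\to\infty$ and then $\varepsilon\to 0$.

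The main obstacle is precisely the convergence on a dense subspace: Lemma~\ref{l1.1} alone yields convergence only on $e_0, e_1, e_2$, and without positivity the Korovkin test-set principle no longer suffices, so one must secure convergence on all polynomials. The algebraic decomposition above offers the cleanest route by reducing the problem to the well-understood classical Durrmeyer operator plus a decaying correction; alternatively one could extend Lemma~\ref{l1.1} to every monomial $e_j$ via the explicit identity $(n+1)\int_0^1 p_{n,k}(t) t^j\,dt = (k+1)(k+2)\cdots(k+j)/[(n+2)\cdots(n+j+1)]$ and verify $D_n^{M,1}(e_j)\to e_j$ directly.
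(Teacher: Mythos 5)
Your proof is correct, but it takes a genuinely different route from the paper's. The paper writes $D_n^{M,1}=D_{n,2}^{M,1}-D_{n,1}^{M,1}$ as a difference of two positive operators, computes the first three moments of each piece, and applies the extended Korovkin theorem (Theorem \ref{t10}) to get $D_{n,1}^{M,1}f\to -l_1(1+x)f$ and $D_{n,2}^{M,1}f\to(1-l_1(1+x))f$ with $l_1=\lim_{n\to\infty}a_1(n)$, then subtracts. You avoid Korovkin and positivity altogether: the kernel bound $\sum_{k}|p_{n,k}^{M,1}(x)|\le|a(x,n)|+|a(1-x,n)|\le C$ gives uniform boundedness $\|D_n^{M,1}f\|_\infty\le C\|f\|_\infty$, and your identity $p_{n,k}^{M,1}(x)-p_{n,k}(x)=(a_1(n)+1)\left(x-\tfrac{1}{2}\right)\left[p_{n-1,k}(x)-p_{n-1,k-1}(x)\right]=-\frac{(a_1(n)+1)(x-1/2)}{n}\,p_{n,k}'(x)$, which indeed follows from (\ref{A}) and also covers the boundary terms $k=0,n$, reduces convergence on polynomials to the classical Durrmeyer operator and its derivative; Weierstrass density and the three-$\varepsilon$ step then finish. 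Comparing the two: the paper's argument stays inside the Korovkin framework and needs only second-order moment computations, but it must split the operator into positive pieces (with the signs depending on which alternative in (\ref{Y2}) holds) and tacitly uses the existence of $l_1$, which is not among the stated hypotheses; your argument needs only boundedness of $a_1(n)$, so it matches the theorem's hypotheses exactly and in fact handles Case 1 and Case 2 uniformly. It is close in spirit to the paper's own later observation that Theorem \ref{t1.6} re-proves Theorems \ref{t1.1} and \ref{t1.3}: your norm estimate is the same bound on $|a(x,n)|+|a(1-x,n)|$ used there, with a qualitative density argument replacing the quantitative $\omega_1$ estimate. The only external input you invoke, uniform convergence of $\widehat D_n(p)$ and of $\frac{d}{dx}\widehat D_n(p)$ for a fixed polynomial $p$, is elementary via the explicit moment formula you quote, since $\widehat D_n(e_j)$ is a polynomial of bounded degree whose coefficients converge to those of $e_j$.
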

\begin{proof} The operators $D_n^{M,1}$ can be written as follows:
	$$ D_n^{M,1}(f;x)=D_{n,2}^{M,1}(f;x)-D_{n,1}^{M,1}(f;x),  $$
	where
\begin{align*}
& D_{n,1}^{M,1}(f;x)=(n+1)\sum_{k=0}^n\left[-a_1(n)xp_{n-1,k}(x)-a_1(n)p_{n-1,k-1}(x)\right]\int_{0}^{1}p_{n,k}(t)f(t)dt;\\
& D_{n,2}^{M,1}(f;x)=(n+1)\sum_{k=0}^n\left[a_0(n)p_{n-1,k}(x)+\left(-a_1(n)x+a_0(n)\right)p_{n-1,k-1}(x)\right]\int_{0}^{1}p_{n,k}(t)f(t)dt.
\end{align*}
Since the sequences $a_i(n),i=0,1$ verify the conditions (\ref{A}) and (\ref{Y2}), it follows $(a_0(n)<0,\,\,a_1(n)>0)$ or $(a_0(n)>0,a_1(n)<0)$.  Therefore, the extended form of the Korovkin theorem can be applied to  $D_{n,1}^{M,1}$ and $D_{n,2}^{M,1}$.
Below are calculated the  moments of the operators $D_{n,i}^{M,1}$:
\begin{align*}
& D_{n,1}^{M,1}(e_0;x)=-a_1(n)(x+1);\,
D_{n,1}^{M,1}(e_1;x)=\displaystyle -\frac{a_1(n)\left(nx^2+nx-x^2+2\right)}{n+2};\\
& D_{n,1}^{M,1}(e_2;x)=\displaystyle-a_1(n)\frac{x^2(x+1)n^2-x(3x^2-x-6)n+2x^3-2x^2-4x+6}{(n+2)(n+3)};\\ & D_{n,2}^{M,1}(e_0;x)=2a_0(n)-a_1(n)x;\\ &
 D_{n,2}^{M,1}(e_1;x)=\displaystyle\frac{\left[2a_0(n)-a_1(n)x\right]nx+a_1(n)x^2-2a_0(n)x-2a_1(n)x+3a_0(n)}{n+2};\\
& D_{n,2}^{M,1}(e_2;x)=\displaystyle \frac{(2a_0(n)-a_1(n)x)x^2n^2+(3a_1(n)x^3-6a_0(n)x^2-6a_1(n)x^2+10a_0(n)x)n}{(n+2)(n+3)}\\
&+\displaystyle\frac{-2a_1(n)x^3+4a_0(n)x^2+6a_1(n)x^2-10a_0(n)x-6a_1(n)x+8a_0(n)}{(n+2)(n+3)} .
\end{align*}
Using the above relations and Theorem \ref{t10} we obtain
\begin{align*}
&\displaystyle\lim_{n\to\infty} D_{n,1}^{M,1}(f;x)=-l_1(1+x)f(x),\\
&\displaystyle\lim_{n\to\infty}D_{n,2}^{M,1}(f;x)= (1-l_1(1+x))f(x),
\end{align*}
where $l_1=\displaystyle\lim_{n\to\infty}a_1(n)$.
Therefore  $\displaystyle\lim_{n\to\infty}D_n^{M,1}(f;x)=f(x)$.
\end{proof}
\begin{theorem}\label{t2.4}
Let $a_i(n)$ be a convergent sequence that satisfies the conditions (\ref{A}) and (\ref{Y1}) and $l_i=\displaystyle\lim_{n\to\infty}a_i(n)$, $i=0,1$. If $f^{\prime\prime}\in C[0,1]$, then
$$
\displaystyle\lim_{n\to\infty} n\left(D_n^{M,1}(f;x)-f(x)\right)=(1-2x)(2l_1+3l_0)f^\prime(x)+x(1-x) (2l_0+l_1)f^{\prime\prime}(x),$$
uniformly on $[0,1]$.
\end{theorem}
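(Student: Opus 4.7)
My plan is to apply a Taylor expansion of $f$ around $x$ and use the linearity of $D_n^{M,1}$ together with the moment identities of Lemmas \ref{l1.1} and \ref{lema2}; the positivity of $D_n^{M,1}$ guaranteed by assumption (\ref{Y1}) will be essential for controlling the Peano-type remainder.

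First I would write, for every $t\in[0,1]$,
\[
f(t)=f(x)+(t-x)f'(x)+\tfrac{1}{2}(t-x)^2 f''(x)+\mu(t,x)(t-x)^2,
\]
where $\mu(t,x)\to 0$ as $t\to x$. Since $f''\in C[0,1]$ is uniformly continuous, $\mu$ is uniformly bounded by some $M>0$ and, for each $\epsilon>0$, there exists $\delta>0$ (independent of $x$) such that $|\mu(t,x)|<\epsilon$ whenever $|t-x|<\delta$.

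Next I would apply $D_n^{M,1}$ term-by-term. By Lemma \ref{l1.1}(i) combined with the normalization (\ref{A}) one has $D_n^{M,1}(e_0;x)=1$, so
\[
D_n^{M,1}(f;x)-f(x)=f'(x)D_n^{M,1}(t-x;x)+\tfrac{1}{2}f''(x)D_n^{M,1}((t-x)^2;x)+D_n^{M,1}(\mu(t,x)(t-x)^2;x).
\]
Multiplying by $n$ and invoking Lemma \ref{lema2}(i), the first summand tends to $(1-2x)(2l_1+3l_0)f'(x)$; invoking Lemma \ref{lema2}(ii), the coefficient of $\tfrac12 f''(x)$ tends to $2x(1-x)(2l_0+l_1)$, because after multiplication by $n/((n+2)(n+3))$ only the $n$-linear summand of that formula survives in the limit.

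The main obstacle, as in every Voronovskaja-type proof, is to show that $nD_n^{M,1}(\mu(t,x)(t-x)^2;x)\to 0$ uniformly in $x$. Here I would use positivity (ensured by (\ref{Y1})) together with the pointwise majorization $|\mu(t,x)|\le\epsilon+M\delta^{-2}(t-x)^2$, valid for all $t,x\in[0,1]$, to obtain
\[
|D_n^{M,1}(\mu(t,x)(t-x)^2;x)|\le \epsilon\, D_n^{M,1}((t-x)^2;x)+\frac{M}{\delta^2}\,D_n^{M,1}((t-x)^4;x).
\]
Lemma \ref{lema2}(ii) gives $D_n^{M,1}((t-x)^2;x)=O(1/n)$ and Lemma \ref{lema2}(iii) gives $D_n^{M,1}((t-x)^4;x)=O(1/n^2)$, uniformly in $x\in[0,1]$. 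Hence after multiplying by $n$ the right-hand side is bounded by $C\epsilon+o(1)$, which can be made arbitrarily small. Combining the three limits yields the claimed asymptotic uniformly on $[0,1]$.
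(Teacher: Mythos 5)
Your proposal is correct, and its skeleton (Taylor expansion with Peano remainder, then $n$ times the first and second central moments from Lemma \ref{lema2}) coincides with the paper's; the difference lies in how the remainder term is killed. The paper bounds $D_n^{M,1}\left(\theta(t,x)(t-x)^2;x\right)$ via the Cauchy--Schwarz inequality for the positive operator, writing it as $\sqrt{D_n^{M,1}(\theta^2(t,x);x)}\sqrt{D_n^{M,1}((t-x)^4;x)}$, and then invokes Theorem \ref{t1.1} (the Korovkin-type uniform convergence) applied to $\theta^2(\cdot,x)$ together with the fourth-moment estimate of Lemma \ref{lema2}(iii). You instead use the classical $\varepsilon$--$\delta$ majorization $|\mu(t,x)|\le\varepsilon+M\delta^{-2}(t-x)^2$, which by positivity reduces everything to the second and fourth central moments, both $O(1/n)$ and $O(1/n^2)$ uniformly because (\ref{A}) and (\ref{Y1}) force the sequences $a_i(n)$ to be bounded. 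Your route is slightly more elementary (it does not need Theorem \ref{t1.1} nor Cauchy--Schwarz) and it makes the uniformity in $x$ fully explicit, since the $\delta$ comes from the uniform continuity of $f''$ on $[0,1]$, whereas the paper's appeal to Theorem \ref{t1.1} for the $x$-dependent function $\theta^2(\cdot,x)$ leaves that uniformity somewhat implicit; the paper's Cauchy--Schwarz device, on the other hand, is the one that generalizes more readily (it is reused in spirit for the non-positive case in Theorem \ref{t2.5} and for the quantitative estimate in Theorem \ref{t2.7}). Both arguments hinge on the positivity of $D_n^{M,1}$ in Case 1, which you correctly flag as essential.
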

\begin{proof}
Applying the Durrmeyer operators $D_n^{M,1}$ to the Taylor's formula, we obtain
$$D_n^{M,1}(f;x)-f(x)=D_n^{M,1}(t-x;x)f^{\prime}(x)+\displaystyle\frac{1}{2}D_n^{M,1}\left((t-x)^2;x\right)f^{\prime\prime}(x)+D_n^{M,1}\left(\theta(t,x)(t-x)^2;x\right),  $$	
where $\theta\in C[0,1]$ and $\displaystyle\lim_{t\to x}\theta(t,x)=0$.

Using the Cauchy-Schwarz inequality for the positive operators $D_n^{M,1}$, we get
$$ D_n^{M,1}\left(\theta(t,x)(t-x)^2;x\right)\leq\sqrt{D_n^{M,1}\left(\theta^2(t,x);x\right)} \sqrt{D_n^{M,1}\left((t-x)^4;x\right)}. $$
Since $\theta^2(x,x)=0$ and $\theta^2(\cdot,x)\in C[0,1]$, by Theorem \ref{t1.1}, we obtain
$$ \displaystyle\lim_{n\to\infty}D_n^{M,1}\left(\theta^2(t,x);x\right)=0 $$
uniformly with respect to $x\in[0,1]$. Therefore, from Lemma \ref{lema2} we obtain
$$ \displaystyle\lim_{n\to\infty}nD_n^{M,1}\left(\theta(t,x)(t-x)^2;x\right)=0. $$
Using the results from Lemma \ref{lema2}, the proof of this theorem is completed.
\end{proof}

Now we extend the results from Theorem \ref{t2.4} when the operator $D_n^{M,1}$ is nonpositive, i.e. the sequences $a_0(n)$ and $a_1(n)$ satisfy \eqref{A} and \eqref{Y2}.

\begin{theorem}\label{t2.5} Let $a_i(n), i=0,1$ be bounded convergent sequences which satisfy \eqref{A} and \eqref{Y2} and $l_i=\displaystyle\lim_{n\to\infty} a_i(n), i=0,1.$ If $f\in C[0,1]$ and $f^{\prime\prime}$ exists at a certain point $x\in[0,1]$, then we have
\begin{align}
\displaystyle\lim_{n\to\infty} n[D_{n}^{M,1}(f;x)-f(x)]&=(1-2x)(2l_1+3l_0)f^\prime(x)+x(1-x) (2l_0+l_1)f^{\prime\prime}(x).\label{eG1}
\end{align}
Moreover the relation \eqref{eG1} holds uniformly on $[0,1]$ if $f^{\prime\prime}\in C[0,1].$
\end{theorem}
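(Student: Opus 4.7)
The plan is to parallel the proof of Theorem \ref{t2.4} with two adjustments, addressing (i) the pointwise-only hypothesis on $f''$ and (ii) the loss of positivity of $D_n^{M,1}$ under \eqref{Y2}. I would begin with Taylor's formula at $x$,
\[
f(t) = f(x) + (t-x) f'(x) + \frac{(t-x)^2}{2} f''(x) + (t-x)^2 \rho(t,x),
\]
where $\rho(\cdot,x)$ is bounded on $[0,1]$, $\rho(x,x)=0$, and $\rho(t,x)\to 0$ as $t\to x$; moreover, if $f''\in C[0,1]$, then $\rho$ is continuous on $[0,1]^2$ and the remainder term behaves uniformly in $x$. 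Applying $D_n^{M,1}$, using $D_n^{M,1}(e_0;x)=1$ from \eqref{A}, and invoking Lemma \ref{lema2}, the main terms $nf'(x)D_n^{M,1}(t-x;x)$ and $\tfrac{n}{2}f''(x)D_n^{M,1}((t-x)^2;x)$ converge exactly to the right-hand side of \eqref{eG1}. The theorem thus reduces to proving $nR_n(x)\to 0$, where $R_n(x):=D_n^{M,1}((t-x)^2\rho(t,x);x)$.

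To estimate $R_n(x)$ without positivity, I would exploit the splitting $D_n^{M,1}=D_{n,2}^{M,1}-D_{n,1}^{M,1}$ introduced in the proof of Theorem \ref{t1.3}: a direct inspection of the two sub-cases of \eqref{Y2} shows that, for suitable signs $\varepsilon_i\in\{\pm 1\}$ depending on $a_0(n),a_1(n)$, each $\varepsilon_i D_{n,i}^{M,1}$ is a positive linear operator. Equivalently, the uniform bound $|a(x,n)|\le|a_0(n)|+|a_1(n)|\le M$ gives
\[
|p_{n,k}^{M,1}(x)| \le M\bigl(p_{n-1,k}(x) + p_{n-1,k-1}(x)\bigr),
\]
so that $|D_n^{M,1}(g;x)|\le M\widetilde D_n(|g|;x)$ for a positive Bernstein--Durrmeyer-type operator $\widetilde D_n$ whose central moments (computed by the same polynomial manipulations as in Lemma \ref{lema2}) satisfy $\widetilde D_n((t-x)^2;x)=O(1/n)$ and $\widetilde D_n((t-x)^4;x)=O(1/n^2)$. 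This reduces the remainder estimate to a positive-operator computation.

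Given $\varepsilon>0$, I would then choose $\delta>0$ with $|\rho(t,x)|<\varepsilon$ whenever $|t-x|<\delta$, and split
\[
|R_n(x)| \le M\widetilde D_n((t-x)^2|\rho|;x) \le \varepsilon M \widetilde D_n((t-x)^2;x) + \frac{MK}{\delta^2}\widetilde D_n((t-x)^4;x),
\]
where $K$ is a uniform bound on $|\rho|$. Multiplying by $n$, the first summand contributes $O(\varepsilon)$ and the second $O(1/n)=o(1)$, yielding $\limsup_n n|R_n(x)|\le C\varepsilon$ and hence $nR_n(x)\to 0$. When $f''\in C[0,1]$, the constants above are uniform in $x$, which gives the uniform version. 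The step I expect to cost the most care is the quantitative verification of the fourth-moment bound $\widetilde D_n((t-x)^4;x)=O(1/n^2)$, since Lemma \ref{lema2}(iii) is stated only for $D_n^{M,1}$ itself; however this is a routine extension of the computations already carried out there.
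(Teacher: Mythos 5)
Your proposal is correct, and it follows the paper's overall strategy (Taylor expansion with Peano remainder, then using only the boundedness of $a(x,n)$ to dominate $D_n^{M,1}$ through the representation \eqref{eG3} by a positive Durrmeyer-type operator, then a near/far splitting controlled by the second and fourth moments), but the implementation of the splitting is genuinely different. The paper splits the \emph{summation index} $k$ into ${\mathcal K}_1,{\mathcal K}_2$ according to $|k/n-x|\lessgtr\delta$ and estimates the two groups separately, using the discrete fourth moment $\sum_k p_{n-1,k}(x)(k/n-x)^4$ for the far indices (see \eqref{ec15}--\eqref{ec16}); you instead split the \emph{integration variable} $t$, dominating the integrand pointwise by $(t-x)^2|\rho(t,x)|\le \varepsilon (t-x)^2+\frac{K}{\delta^2}(t-x)^4$ and then invoking $\widetilde D_n((t-x)^2;x)=O(1/n)$, $\widetilde D_n((t-x)^4;x)=O(1/n^2)$. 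Your variant is in fact the logically tighter one: in the paper's split, the assertion that $k\in{\mathcal K}_1$ forces $|\theta(t,x)|<\varepsilon$ is not literally justified, since $t$ ranges over all of $[0,1]$ in the integral, whereas your $t$-splitting covers the whole remainder without such a mismatch. The step you flagged as needing care, the fourth-moment bound for the dominating operator, is not really an extra computation: your $\widetilde D_n$ is exactly $D_n^{M,1}$ with $a_0(n)=1$, $a_1(n)=0$ (so $a(x,n)\equiv 1$), and Lemma \ref{lema2} (which does not use \eqref{A}) then gives $\widetilde D_n((t-x)^2;x)=O(1/n)$ and $\widetilde D_n((t-x)^4;x)=O(1/n^2)$ uniformly in $x$; the same quantities also occur explicitly as $S_1$ and inside $S_2$ in the proof of Theorem \ref{t2.7}. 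Your side remark that each $D_{n,i}^{M,1}$ from the proof of Theorem \ref{t1.3} is, up to sign, a positive operator under \eqref{A} and \eqref{Y2} is also correct, though your argument never actually needs it. The uniformity claim for $f''\in C[0,1]$ is handled as you say, since $\delta$ and $K$ can then be chosen via $\omega(f'',\cdot)$ independently of $x$ and the moment bounds are uniform.
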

\begin{proof}
	In the same manner as the proof of previous theorem, applying the Durrmeyer operator $ D_n^{M,1}$ to the Taylor's formula it is sufficient to show that
\begin{align}
\displaystyle\lim_{n\to\infty} nD_{n}^{M,1}(\theta(t,x)(t-x)^2;x)&=0.\label{eG2}
\end{align}
Since the operators $ D_n^{M,1}$ are not positive linear operators we will introduce new techniques in order to prove the theorem.
From \eqref{e3} and \eqref{X} we may represent $D_{n}^{M,1}$ as
\begin{align}
D_{n}^{M,1}(f;x)&=(n+1)\sum_{k=0}^{n-1}\left(a(x,n)\int_0^1p_{n,k}(t)f(t)dt+a(1-x,n)\int_0^1p_{n,k+1}(t)f(t)dt\right)p_{n-1,k}(x).\label{eG3}
\end{align}
Let $\varepsilon>0$ be given. There exist a $\delta>0$ such that if $|t-x|<\delta$ then $|\theta(t,x)|<\varepsilon$. We denote
$${\mathcal K}_1=\left\{k:\left|\frac{k}{n}-x\right|<\delta, k=0,1,2,\cdots ,n\right\},\quad {\mathcal K}_2=\left\{k:\left|\frac{k}{n}-x\right|\ge \delta, k=0,1,2,\cdots ,n\right\}.$$
The boundedness of the sequences $a_i(n)$, $i=0,1$ implies that these is a constant $C>0$ such that $|a(x,n)|<C$. Then, it follows

\begin{align}
\left|D_{n}^{M,1}(\theta(t,x)(t-x)^2;x)\right|&\leq (n+1)C\displaystyle\sum_{k=0}^{n-1}p_{n-1,k}(x)\left\{\int_0^1 p_{n,k}(t)|\theta(t,x)|(t-x)^2 dt\right.\nonumber\\
&+\left.\int_0^1 p_{n,k+1}(t)|\theta(t,x)|(t-x)^2 dt\right\}. \label{eG4}
\end{align}
Let $k\in {\mathcal K}_1$. Hence $|\theta(t,x)|<\varepsilon$. Therefore we get
\begin{align}
\left|D_{n}^{M,1}(\theta(t,x)(t-x)^2;x)\right|&\leq (n\!+\!1)C\varepsilon\displaystyle\sum_{k=0}^{n-1}p_{n-1,k}(x)\left\{\int_0^1 p_{n,k}(t)(t\!-\!x)^2 dt\!+\!\int_0^1 p_{n,k+1}(t)(t\!-\!x)^2 dt\right\}\nonumber\\
&=\displaystyle\frac{4\varepsilon C}{(n+2)(n+3)}\left(nx(1-x)+7x^2-7x+2\right).\label{ec15}
\end{align}

Let $k\in {\mathcal K}_2$. We denote $M=\displaystyle\sup_{0\leq t\leq 1}|\theta(t,x)|(t-x)^2$. Then $|\theta(t,x)|(t-x)^2\leq\displaystyle \frac{M}{\delta^4}\left(\frac{k}{n}-x\right)^4$. From (\ref{eG4}) we get the following upper bound
\begin{align}\label{ec16}
&\left|D_{n}^{M,1}(\theta(t,x)(t-x)^2;x)\right|\leq \displaystyle\frac{2MC}{\delta^4}\sum_{k=0}^{n-1} p_{n-1,k}(x)\left(\frac{k}{n}-x\right)^4\nonumber\\
&=\displaystyle\frac{2MC}{\delta^4n^4}x\left[3x(1-x)^2n^2+(1-x)(26x^2-16x+1)n+(2x-1)(12x^2-12x+1)\right].
\end{align}
Using (\ref{ec15}) and (\ref{ec16}), it follows
\begin{align*}
&\left|D_{n}^{M,1}(\theta(t,x)(t-x)^2;x)\right|\leq\displaystyle\frac{4\varepsilon C}{(n+2)(n+3)}\left(nx(1-x)+7x^2-7x+2\right)\nonumber\\
&+\displaystyle\frac{2MC}{\delta^4n^4}x\left[3x(1-x)^2n^2+(1-x)(26x^2-16x+1)n+(2x-1)(12x^2-12x+1)\right].
\end{align*}
Therefore, the last inequality leads to (\ref{eG2}) and the proof  is completed.
\end{proof}
The next result is a direct estimate for the Durrmeyer operators $D_n^{M,1}$:
\begin{theorem}\label{t1.6} If $f$ is a bounded function for $x\in [0,1]$,   $a_1(n)$ is a bounded sequence and $a_0(n)$, $a_1(n)$ satisfy (\ref{A}), then
\begin{equation*}
\| D_n^{M,1}f-f\|\leq \displaystyle\left(3|a_1(n)|+1 \right)\left(1+\sqrt{2}\right)\omega\left(f;\frac{1}{\sqrt{n}}\right),\,\, n\geq 1,
\end{equation*}
where $\|\cdot \|$ is the uniform norm on the interval $[0,1]$ and $\omega(f,\delta)$ is the first order modulus of continuity.
\end{theorem}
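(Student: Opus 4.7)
The plan is a Shisha--Mond-type argument, adapted to a non-positive operator through an absolute-value majorant. By hypothesis \eqref{A} together with Lemma \ref{l1.1}(i), the operator $D_n^{M,1}$ reproduces constants, so one may write $D_n^{M,1}(f;x) - f(x) = D_n^{M,1}(f(t)-f(x);x)$. Since $D_n^{M,1}$ may fail to be positive (Case 2), I would dominate it pointwise by the positive linear majorant
$$\tilde D_n^{M,1}(g;x) := (n+1)\sum_{k=0}^{n}|p_{n,k}^{M,1}(x)|\int_0^1 p_{n,k}(t)\,g(t)\,dt,$$
which satisfies the trivial bound $|D_n^{M,1}(g;x)| \leq \tilde D_n^{M,1}(|g|;x)$.

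The first estimate needed is $\tilde D_n^{M,1}(e_0;x) \leq 3|a_1(n)|+1 =: A$. Starting from $|p_{n,k}^{M,1}(x)| \leq |a(x,n)|\,p_{n-1,k}(x) + |a(1-x,n)|\,p_{n-1,k-1}(x)$ and using $|a_0(n)| \leq (1+|a_1(n)|)/2$ (an immediate consequence of \eqref{A}), one gets $|a(x,n)|, |a(1-x,n)| \leq (3|a_1(n)|+1)/2$; summing over $k$ with $\sum_k p_{n-1,k}(x) = 1$ gives the claim.

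The main computational step is to establish $\tilde D_n^{M,1}((t-x)^2;x) \leq 2A/n =: B$. The same splitting yields $\tilde D_n^{M,1}((t-x)^2;x) \leq |a(x,n)|S_1(x) + |a(1-x,n)|S_2(x)$, where $S_1,S_2$ are the sums with weights $p_{n-1,k}(x)$ and $p_{n-1,k-1}(x)$, respectively. Using
$$(n+1)\int_0^1 p_{n,k}(t)(t-x)^2\,dt = \frac{(k+1)(k+2)}{(n+2)(n+3)} - \frac{2(k+1)x}{n+2} + x^2$$
together with the standard Bernstein moments $\sum_k k^j\,p_{n-1,k}(x)$ for $j\leq 2$ (with a $k\mapsto k-1$ shift for $S_2$), one finds after simplification that $S_i(x) \leq (n/2 + 6)/[(n+2)(n+3)] \leq 2/n$ uniformly on $[0,1]$, which gives the bound on $\tilde D_n^{M,1}((t-x)^2;x)$.

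Finally, combining $|f(t)-f(x)| \leq (1+\sqrt{n}\,|t-x|)\,\omega(f;1/\sqrt{n})$ with the Cauchy--Schwarz inequality for the \emph{positive} operator $\tilde D_n^{M,1}$ yields
\begin{align*}
|D_n^{M,1}(f;x)-f(x)| &\leq \tilde D_n^{M,1}(|f(t)-f(x)|;x)\\
&\leq \omega(f;1/\sqrt{n})\bigl[\tilde D_n^{M,1}(e_0;x) + \sqrt{n}\,\tilde D_n^{M,1}(|t-x|;x)\bigr]\\
&\leq \omega(f;1/\sqrt{n})\bigl[A + \sqrt{n AB}\bigr] = (1+\sqrt{2})\,A\,\omega(f;1/\sqrt{n}),
\end{align*}
which is the stated inequality. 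The main obstacle is the explicit evaluation of $S_1(x)$ and $S_2(x)$: it is mechanical but requires careful bookkeeping of the index shift, and the clean bound $B = 2A/n$ relies on the constant terms appearing in the numerators of $S_1, S_2$ both being bounded by $6$ on $[0,1]$.
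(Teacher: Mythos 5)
Your proposal is correct and follows essentially the same route as the paper: both exploit the reproduction of constants via \eqref{A}, replace the coefficients $a(x,n)$, $a(1-x,n)$ by their absolute values (bounded by $\tfrac{3|a_1(n)|+1}{2}$), use $\omega(f;|t-x|)\le(1+\sqrt{n}|t-x|)\omega(f;1/\sqrt{n})$, and control the first absolute moment by Cauchy--Schwarz together with the explicit second-moment sums $S_1,S_2$. Your packaging of the positivization as the majorant operator $\tilde D_n^{M,1}$ with a single global Cauchy--Schwarz step is only a cosmetic variant of the paper's inline estimate based on the representation \eqref{eG3}, and your bound $S_i(x)\le (n/2+6)/[(n+2)(n+3)]\le 2/n$ matches the paper's computation.
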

\begin{proof}Using Lemma \ref{l1.1}, condition (\ref{A}) and representation (\ref{eG3}), we get
\begin{align*}
\left|D_{n}^{M,1}(f;x)-f(x)\right|&\leq (n+1)|a(x,n)|\displaystyle\sum_{k=0}^{n-1}p_{n-1,k}(x)\int_0^1p_{n,k}(t)|f(t)-f(x)|dt\\
&+(n+1)|a(1-x,n)|\displaystyle\sum_{k=0}^{n-1}p_{n-1,k}(x)\int_0^1p_{n,k+1}(t)|f(t)-f(x)|dt
\nonumber\\
&\leq (n+1)|a(x,n)|\displaystyle\sum_{k=0}^{n-1}p_{n-1,k}(x)\int_0^1 p_{n,k}(t)\omega(f;|t-x|)dt\\
&+ (n+1)|a(1-x,n)|\displaystyle\sum_{k=0}^{n-1}p_{n-1,k}(x)\int_0^1 p_{n,k+1}(t)\omega(f;|t-x|)dt.
\end{align*}
Using the following well known relation for the first order modulus of continuity
\begin{equation*}
\omega(f;|t-x|)\leq \left( 1+\sqrt{n}|t-x|\right)\omega\left(f;\frac{1}{\sqrt{n}}\right),
\end{equation*}
we get
\begin{align*}
\left|D_{n}^{M,1}(f;x)-f(x)\right|&\leq |a(x,n)|\omega\left(f;\frac{1}{\sqrt{n}}\right)\left\{1+(n+1)\sqrt{n}\sum_{k=0}^{n-1}p_{n-1,k}(x)\int_0^1p_{n,k}(t)|t-x|dt\right\}\\
&+|a(1-x,n)|\omega\left(f;\frac{1}{\sqrt{n}}\right)\left\{1+(n+1)\sqrt{n}\sum_{k=0}^{n-1}p_{n-1,k}(x)\int_0^1p_{n,k+1}(t)|t-x|dt\right\}.
\end{align*}
From H\"{o}lder's inequality it follows:
\begin{align}\label{ec21}
&\sum_{k=0}^{n-1}p_{n-1,k}(x)\int_0^1p_{n,k}(t)|t-x|dt\leq \displaystyle\sum_{k=0}^{n-1}p_{n-1,k}(x)\left[\int_0^1p_{n,k}(t)dt\right]^{\frac{1}{2}}\left[\int_0^1p_{n,k}(t)(t-x)^2\right]^{\frac{1}{2}}\nonumber\\
&=\displaystyle\frac{1}{\sqrt{n+1}}\sum_{k=0}^{n-1}p_{n-1,k}(x)\left[\int_0^1p_{n,k}(t)(t-x)^2dt\right]^{\frac{1}{2}}\nonumber\\
&=\displaystyle\frac{1}{\sqrt{n+1}}\left[\sum_{k=0}^{n-1}p_{n-1,k}(x)\right]^{\frac{1}{2}}\left[\sum_{k=0}^{n-1}p_{n-1,k}(x)\int_0^1p_{n,k}(t)(t-x)^2dt\right]^{\frac{1}{2}}\nonumber\\
&=\displaystyle\frac{1}{\sqrt{n+1}}\sqrt{\frac{-2nx^2+2nx+14x^2-10x+2}{(n+1)(n+2)(n+3)}}\leq\frac{\sqrt{2}}{(n+1)\sqrt{n+2}}.
\end{align}
Again as in the prove of the previous relation, we get
\begin{equation}\label{p}
\sum_{k=0}^{n-1}p_{n-1,k}(x)\int_0^1p_{n,k+1}(t)|t-x|dt\leq \frac{\sqrt{2}}{(n+1)\sqrt{n+2}} .
\end{equation}
Consequently, the inequalities (\ref{ec21}) and  (\ref{p}) lead to the following relation:
\begin{align*}
\left|D_{n}^{M,1}(f;x)-f(x)\right|&\leq\left( \left| a(x,n)\right|+ \left| a(1-x,n)\right|\right) \omega\left(f;\frac{1}{\sqrt{n}}\right)\left(1+\sqrt{2}\right).
\end{align*}
It is clear from (\ref{A}) that
\begin{equation}\label{ec23}
|a(x,n)|=|a_1(n)x+a_0(n)|\leq |a_1(n)|+|a_0(n)|=|a_1(n)|+\left|\frac{1-a_1(n)}{2}\right|\leq \frac{3}{2}|a_1(n)|+\frac{1}{2}.
\end{equation}
The same upper bound (\ref{ec23}) holds for $|a(1-x,n)|$. Therefore,
\begin{equation*}
\left|D_{n}^{M,1}(f;x)-f(x)\right|\leq\displaystyle\left(3|a_1(n)|+1\right)\left(1+\sqrt{2}\right)\omega\left(f;\frac{1}{\sqrt{n}}\right).
\end{equation*}
The proof of Theorem \ref{t1.6} is completed.
\end{proof}

\begin{remark}
We point out that in \cite[Theorem 9]{1} it was supposed that the modified Bernstein operator $B_n^{M,1}$ is positive, although the proof holds true also for the cases when the operator is nonpositive. Of course we need that the sequence $a_1(n)$ is bounded in the last case.
\end{remark}

\begin{cor} If $f\in C[0,1]$, then $\displaystyle\lim_{n\to\infty}\omega\left(f,\frac{1}{\sqrt{n}}\right)=0$, and thus another proof of the Theorems \ref{t1.1} and  \ref{t1.3} is given.
\end{cor}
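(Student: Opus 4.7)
The plan is to handle the two assertions in the corollary sequentially. For the limit statement about the modulus of continuity, I would invoke the standard fact that a continuous function on a compact interval is uniformly continuous: given $f\in C[0,1]$ and $\varepsilon>0$, there is a $\delta>0$ such that $|f(t)-f(x)|<\varepsilon$ whenever $|t-x|<\delta$. By definition of $\omega(f,\cdot)$ this gives $\omega(f,\eta)\le\varepsilon$ for all $\eta\in(0,\delta]$. Since $1/\sqrt{n}\to 0$, we have $1/\sqrt{n}<\delta$ for all sufficiently large $n$, hence $\omega(f,1/\sqrt{n})\to 0$.

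For the second assertion, I would feed this limit into Theorem \ref{t1.6}. Under the hypotheses of either Theorem \ref{t1.1} (conditions \eqref{A} and \eqref{Y1}) or Theorem \ref{t1.3} (conditions \eqref{A} and \eqref{Y2} together with boundedness of $a_1(n)$), the sequence $a_1(n)$ is bounded, so there exists $M>0$ with $3|a_1(n)|+1\le M$ for every $n$. Theorem \ref{t1.6} then yields
\begin{equation*}
\|D_n^{M,1}f-f\|\le M(1+\sqrt{2})\,\omega\!\left(f,\tfrac{1}{\sqrt{n}}\right),
\end{equation*}
and letting $n\to\infty$, the right-hand side tends to $0$ by the first part. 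This gives the uniform convergence $D_n^{M,1}f\to f$ on $[0,1]$, which is exactly the conclusion of Theorems \ref{t1.1} and \ref{t1.3}.

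There is essentially no obstacle here; the corollary is a packaging statement whose content is already encoded in Theorem \ref{t1.6}. The only point to be slightly careful about is verifying that the boundedness hypothesis on $a_1(n)$ used in Theorem \ref{t1.6} is indeed guaranteed in each of the two regimes covered by Theorems \ref{t1.1} and \ref{t1.3}; in the first case \eqref{A} and \eqref{Y1} force $-1\le a_1(n)\le 1$ as noted in the paper, and in the second case boundedness is an explicit hypothesis. With that observation the two steps above complete the proof.
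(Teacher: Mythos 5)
Your proposal is correct and follows exactly the argument the paper intends (the corollary is stated as an immediate consequence of Theorem \ref{t1.6}): uniform continuity of $f$ on $[0,1]$ gives $\omega\left(f,\frac{1}{\sqrt{n}}\right)\to 0$, and the bound in Theorem \ref{t1.6}, together with the boundedness of $a_1(n)$ in both regimes, yields the uniform convergence asserted in Theorems \ref{t1.1} and \ref{t1.3}. Your extra care in checking that $a_1(n)$ is bounded under \eqref{A}, \eqref{Y1} and explicitly assumed in Theorem \ref{t1.3} is exactly the right detail to note.
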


A function $f$ satisfies the Lipschitz condition of order $r$ with the constant $k$ on $[0,1]$, i.e.
$$ |f(x_1)-f(x_2)|\leq k|x_1-x_2|^r,\, 0<r\leq 1,\, k>0 \textrm{ for } x_1,x_2\in [0,1] $$
if and only if $\omega(\delta)\leq k\delta^r$.

\begin{cor} If $f$ satisfies the Lipschitz condition of order $r$ with constant $k$, then
\begin{equation*}
\|D_n^{M,1}f-f\|\leq \displaystyle\left(3|a_1(n)|+1\right)\left(1+\sqrt{2}\right)k n^{-\frac{r}{2}}.
\end{equation*}
\end{cor}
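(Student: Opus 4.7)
The plan is to derive this corollary as an immediate consequence of Theorem \ref{t1.6}, using only the characterization of Lipschitz continuity in terms of the first order modulus of continuity, which is recalled in the sentence just before the corollary.

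First, I would invoke Theorem \ref{t1.6} applied to the given $f$, which yields the uniform bound
\[
\|D_n^{M,1}f-f\| \;\leq\; \left(3|a_1(n)|+1\right)\left(1+\sqrt{2}\right)\,\omega\!\left(f;\tfrac{1}{\sqrt{n}}\right)
\]
valid for every $n\geq 1$, provided $a_0(n),a_1(n)$ satisfy \eqref{A} and $a_1(n)$ is bounded. Note that the hypothesis of Theorem \ref{t1.6} requires only that $f$ be bounded on $[0,1]$, which any $\mathrm{Lip}\,r$ function on the compact interval $[0,1]$ automatically is, so the theorem is indeed applicable.

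Next, I would use the characterization recalled just before the corollary: the assumption that $f$ satisfies a Lipschitz condition of order $r$ with constant $k$ is equivalent to $\omega(f;\delta)\leq k\delta^r$ for all $\delta>0$. Substituting $\delta=1/\sqrt{n}$ gives
\[
\omega\!\left(f;\tfrac{1}{\sqrt{n}}\right) \;\leq\; k\, n^{-r/2}.
\]
Plugging this into the estimate from Theorem \ref{t1.6} yields exactly the claimed inequality, and the proof is complete.

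There is really no obstacle here: the corollary is a direct specialization of Theorem \ref{t1.6} obtained by bounding $\omega(f;1/\sqrt{n})$ via the Lipschitz hypothesis. The only thing worth being explicit about in the write-up is that the constants $k$ and $r$ are absorbed exactly as stated, and that no additional restriction on $a_0(n),a_1(n)$ beyond those already imposed for Theorem \ref{t1.6} is needed.
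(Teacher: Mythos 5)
Your proposal is correct and follows exactly the route the paper intends: apply Theorem \ref{t1.6} and then bound $\omega\!\left(f;\tfrac{1}{\sqrt{n}}\right)\leq k\,n^{-r/2}$ via the Lipschitz characterization stated just before the corollary. Nothing is missing.
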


Next we will give a quantitative form of Voronovskaja type theorem for the Durrmeyer operator $D_n^{M,1}$ in the case when $e_0,e_1$ are reproduced, i.e
$$
2a_0(n)+a_1(n)=1,\,\,
3a_0(n)+2a_1(n)=0.
$$
Hence $a_0(n)=2$ and $a_1(n)=-3$. From (\ref{Y2}) it follows that $D_n^{M,1}$ is non-positive operator. The advantage in this case  is that now $D_n^{M,1}$ has order of approximation better than $O\left(\frac{1}{n}\right)$ for $f\in C^2[0,1]$.
\begin{theorem}\label{t2.7}
	Let $g\in C^2[0,1]$, $x\in [0,1]$ fixed and $D_n^{M,1}$ defined above. Then,
	\begin{equation*}
	\left| D_n^{M,1}(g;x)-g(x)-\frac{1}{2}D_n^{M,1}\left((t-x)^2;x\right)g^{\prime\prime}(x)\right|\leq C\frac{1}{n}\omega\left(g^{\prime\prime},\frac{1}{\sqrt{n}}\right),
	\end{equation*}
	where $C>0$ is a constant independent of $n$, $x$.
\end{theorem}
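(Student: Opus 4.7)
The plan is to apply Taylor's formula with Peano remainder. For $g \in C^2[0,1]$ and $x$ fixed, write $g(t) = g(x) + (t-x)g'(x) + \tfrac{1}{2}(t-x)^2 g''(x) + R(t,x)$, where $|R(t,x)| \leq \tfrac{1}{2}(t-x)^2 \omega(g''; |t-x|)$. The prescribed parameters $a_0(n)=2$, $a_1(n)=-3$ satisfy both $2a_0(n)+a_1(n)=1$ and $3a_0(n)+2a_1(n)=0$, so Lemmas \ref{l1.1} and \ref{lema2}(i) give $D_n^{M,1}(e_0;x)=1$ and $D_n^{M,1}(t-x;x)=0$. Applying $D_n^{M,1}$ to the Taylor identity therefore kills the polynomial part and leaves
\[ D_n^{M,1}(g;x) - g(x) - \tfrac{1}{2}D_n^{M,1}((t-x)^2;x)\,g''(x) = D_n^{M,1}(R(\cdot,x);x), \]
so the task reduces to showing $|D_n^{M,1}(R;x)| \leq C n^{-1}\omega(g'';1/\sqrt{n})$.

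The main obstacle is that with these parameters we are in case (\ref{Y2}), so $D_n^{M,1}$ is non-positive and Cauchy--Schwarz cannot be applied directly. Following the device used in the proof of Theorem \ref{t2.5}, I would invoke the representation (\ref{eG3}) to split the operator into its two Durrmeyer-type pieces
\[ \hat D_n(f;x) := (n+1)\sum_{k=0}^{n-1} p_{n-1,k}(x)\int_0^1 p_{n,k}(t)f(t)\,dt, \]
and $\hat D_n^*$ defined analogously with $p_{n,k+1}(t)$ in place of $p_{n,k}(t)$. Both $\hat D_n$ and $\hat D_n^*$ are positive linear operators that reproduce constants. Since $|a(x,n)|=|-3x+2|\leq 3$ and $|a(1-x,n)|\leq 3$ on $[0,1]$, it follows that
\[ |D_n^{M,1}(R;x)| \leq 3\,\hat D_n(|R|;x) + 3\,\hat D_n^*(|R|;x). \]

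Using the standard inequality $\omega(g'';|t-x|) \leq (1+\sqrt{n}\,|t-x|)\omega(g'';1/\sqrt{n})$ one has $|R(t,x)| \leq \tfrac{1}{2}\omega(g'';1/\sqrt{n})[(t-x)^2 + \sqrt{n}\,|t-x|^3]$, so the problem reduces to controlling the second and third absolute central moments of $\hat D_n$ and $\hat D_n^*$. Their second central moments are $O(1/n)$ uniformly in $x$: indeed, the sum $\hat D_n((t-x)^2;x)+\hat D_n^*((t-x)^2;x) = \tfrac{4}{(n+2)(n+3)}[nx(1-x)+7x^2-7x+2]$ was already computed in the derivation of (\ref{ec15}). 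For the third absolute central moments I would apply Cauchy--Schwarz to the positive operators $\hat D_n$ and $\hat D_n^*$, e.g.\
\[ \hat D_n(|t-x|^3;x) \leq \sqrt{\hat D_n((t-x)^2;x)\cdot\hat D_n((t-x)^4;x)}. \]
The one calculation still required is the fourth central-moment bound $\hat D_n((t-x)^4;x) = O(1/n^2)$ (uniformly in $x$), which follows by a routine but lengthy moment computation in the spirit of Lemma \ref{lema2}(iii), relying on the standard identity $(n+1)\int_0^1 p_{n,k}(t)t^j\,dt = \prod_{i=1}^{j}(k+i)/\prod_{i=1}^{j}(n+i+1)$. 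Combining these ingredients gives $\hat D_n(|t-x|^3;x) = O(n^{-3/2})$, and hence
\[ |D_n^{M,1}(R;x)| \leq C\,\omega(g'';1/\sqrt{n})\bigl[O(1/n) + \sqrt{n}\cdot O(n^{-3/2})\bigr] = C\,n^{-1}\omega(g'';1/\sqrt{n}), \]
as required.
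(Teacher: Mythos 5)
Your proposal is correct and follows essentially the same route as the paper's proof: Taylor's formula with the remainder bounded by $\tfrac12 (t-x)^2\omega(g'';|t-x|)\le\tfrac12\omega(g'';1/\sqrt n)\,[(t-x)^2+\sqrt n\,|t-x|^3]$, the representation (\ref{eG3}) with the bound on $|a(x,n)|$ to reduce the non-positive operator to the two positive Durrmeyer-type pieces, and Cauchy--Schwarz to control the third absolute moment by the second and fourth central moments. The only difference is presentational: the paper carries out the fourth-moment computation explicitly inside its quantities $S_1$ and $S_2$ (and uses the sharper bound $|a(x,n)|\le 2$), whereas you assert the standard $O(1/n^2)$ estimate, which is indeed routine and matches the paper's explicit formulas.
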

\begin{proof}
Let $g,g^{\prime\prime}\in C[0,1]$. Applying $D_n^{M,1}$ to the Taylor's formula, we get
\begin{equation}
\label{e27}
D_{n}^{M,1}(g;x)-g(x)-\displaystyle\frac{1}{2}D_n^{M,1}\left((t-x)^2;x\right)g^{\prime\prime}(x)=\frac{1}{2}D_n^{M,1}\left(\theta(t,x)(t-x)^2;x\right),
\end{equation}
where $\theta(t,x)=g^{\prime\prime}(\xi_x)-g^{\prime\prime}(x)$ and $\xi_x$ is a point between $t$ and $x$. Therefore,
$$ |\theta(t,x)|\leq \omega\left(g^{\prime\prime},|t-x|\right)\leq \left(1+\sqrt{n}|t-x|\right)\omega\left(g^{\prime\prime},\frac{1}{\sqrt{n}}\right). $$
Since $|a_1(n)x+a_0(n)|\leq \displaystyle 2$ for $x\in [0,1]$, using  the relation (\ref{eG3}),  we have
\begin{align}\label{o}
\left|D_n^{M,1}\left(\theta(t,x)(t-x)^2;x\right)\right| &\leq \displaystyle 2(n+1)\omega\left(g^{\prime\prime},\frac{1}{\sqrt{n}}\right)\left(S_1+\sqrt{n}S_2\right),
\end{align}
where
\begin{align*}
S_1&:=\sum_{k=0}^{n-1}p_{n-1,k}(x)\left(\int_0^1p_{n,k}(t)(t-x)^2dt+\int_0^1p_{n,k+1}(t)(t-x)^2 dt\right)\\
&=\displaystyle\frac{4}{(n+1)(n+2)(n+3)}\left[nx(1-x)+7x^2-7x+2\right]\leq \frac{n+8}{(n+1)(n+2)(n+3)},\\
S2&:=\displaystyle\sum_{k=0}^{n-1}p_{n-1,k}(x)\left(\int_0^1p_{n,k}(t)|t-x|(t-x)^2dt+\int_0^1p_{n,k+1}(t)|t-x|(t-x)^2dt\right)\\
&=\displaystyle\sum_{k=0}^{n-1}p_{n-1,k}(x)\left[\int_0^1p_{n,k}(t)(t-x)^2dt\right]^{\frac{1}{2}}\cdot\left[\int_0^1p_{n,k}(t)(t-x)^4dt\right]^{\frac{1}{2}}\\
&+\displaystyle\sum_{k=0}^{n-1}p_{n-1,k}(x)\left[\int_0^1p_{n,k+1}(t)(t-x)^2dt\right]^{\frac{1}{2}}\cdot\left[\int_0^1p_{n,k+1}(t)(t-x)^4dt\right]^{\frac{1}{2}}\\
&=\left[\displaystyle\sum_{k=0}^{n-1}p_{n-1,k}(x)\int_0^1p_{n,k}(t)(t-x)^2dt\right]^{\frac{1}{2}}\cdot\left[\sum_{k=0}^{n-1}p_{n-1,k}(x)\int_0^1p_{n,k}(t)(t-x)^4dt\right]^{\frac{1}{2}}\\
&+\left[\displaystyle\sum_{k=0}^{n-1}p_{n-1,k}(x)\int_0^1p_{n,k+1}(t)(t-x)^2dt\right]^{\frac{1}{2}}\cdot\left[\sum_{k=0}^{n-1}p_{n-1,k}(x)\int_0^1p_{n,k+1}(t)(t-x)^4dt\right]^{\frac{1}{2}}\\
&=\displaystyle\frac{2\sqrt{6}}{(n+1)(n+2)(n+3)\sqrt{(n+4)(n+5)}}\left\{(x(1-x)n+7x^2-5x+1)^{\frac{1}{2}}\right.\\
&\cdot\left[x^2(1-x)^2n^2+3x(1-x)(11x^2-9x+2)n+62x^4-98x^3+62x^2-18x+2\right]^{\frac{1}{2}}\\
&+\left[x(1-x)n+7x^2-9x+3\right]^{\frac{1}{2}}\cdot\left[x^2(1-x)^2n^2+3x(1-x)(11x^2-13x+4)n+62x^4\right.\\
&\left.\left.-150x^3+140x^2-60x+10\right]^{\frac{1}{2}}
\right\}.
\end{align*}
Replacing $S_1$ and $S_2$ in relation (\ref{o}) we conclude that there is a positive constant $C$ independent of $n$, $x$ such that
\begin{equation}\label{Xt}	\left|D_n^{M,1}\left(\theta(t,x)(t-x)^2;x\right)\right|\leq C\frac{1}{n}\omega\left(g^{\prime\prime},\frac{1}{\sqrt{n}}\right).\end{equation}
Consequently, the proof is completed.

\end{proof}

The next goal is to extend the direct estimate in Theorem \ref{t1.6} in terms of second order moduli $\omega_2\left(f;\frac{1}{\sqrt{n}}\right).$

\begin{theorem}\label{t2.8}
If $f\in C[0,1]$, $a_0(n)=2$, $a_1(n)=-3$, then
\begin{equation*}
 \|D_n^{M,1}f-f\|_{C[0,1]}\leq C\omega_2\left(f;\frac{1}{\sqrt{n}}\right).
\end{equation*}
\end{theorem}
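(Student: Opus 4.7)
The strategy is the classical Peetre $K$-functional argument. Define
$$K_2(f,\delta)=\inf_{g\in C^2[0,1]}\{\|f-g\|+\delta\|g''\|\},$$
and recall the equivalence $K_2(f,1/n)\le M\,\omega_2(f,1/\sqrt n)$ with $M$ an absolute constant. It then suffices to show that for every $g\in C^2[0,1]$
$$\|D_n^{M,1}f-f\|\le C_1\|f-g\|+\frac{C_2}{n}\|g''\|,$$
take the infimum over $g$, and invoke the equivalence.

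The first ingredient is a uniform operator-norm bound. Using representation (\ref{eG3}), together with $(n+1)\int_0^1 p_{n,k}(t)\,dt=1$ and $\sum_k p_{n-1,k}(x)=1$, I get by the triangle inequality
$$|D_n^{M,1}(f;x)|\le \bigl(|a(x,n)|+|a(1-x,n)|\bigr)\|f\|.$$
For $a_0(n)=2$, $a_1(n)=-3$ the affine function $a(x,n)=2-3x$ takes values in $[-1,2]$ on $[0,1]$, so $|a(x,n)|\le 2$ and similarly $|a(1-x,n)|\le 2$; hence $\|D_n^{M,1}\|\le 4=:C_1$ independently of $n$.

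The core step is the smooth estimate $\|D_n^{M,1}g-g\|\le (C_2/n)\|g''\|$ for $g\in C^2[0,1]$. With the chosen sequences, condition (\ref{A}) together with the extra relation $3a_0(n)+2a_1(n)=0$ ensures by Lemma \ref{l1.1} that $D_n^{M,1}$ reproduces both $e_0$ and $e_1$. Theorem \ref{t2.7} therefore applies and yields
$$\left|D_n^{M,1}(g;x)-g(x)-\tfrac12 D_n^{M,1}((t-x)^2;x)g''(x)\right|\le \frac{C}{n}\omega\!\left(g'',\tfrac{1}{\sqrt n}\right)\le \frac{2C}{n}\|g''\|.$$
On the other hand, Lemma \ref{lema2}(ii) with $2a_0(n)+a_1(n)=1$ gives $|D_n^{M,1}((t-x)^2;x)|\le C'/n$ uniformly for $x\in[0,1]$, whence $|\tfrac12 D_n^{M,1}((t-x)^2;x)g''(x)|\le (C'/2n)\|g''\|$. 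Adding the two bounds produces the desired $\|g''\|/n$ estimate.

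Combining the operator-norm bound with the smooth estimate gives, for any $g\in C^2[0,1]$,
$$\|D_n^{M,1}f-f\|\le\|D_n^{M,1}(f-g)\|+\|f-g\|+\|D_n^{M,1}g-g\|\le (C_1+1)\|f-g\|+\frac{C_2}{n}\|g''\|,$$
and passing to the infimum over $g$ finishes the argument. The principal obstacle is that with the choice $a_0(n)=2$, $a_1(n)=-3$ the operator $D_n^{M,1}$ is no longer positive, which rules out direct Cauchy--Schwarz or monotone-Korovkin shortcuts. This is circumvented in two places: the uniform operator-norm bound survives because the coefficients $a(\cdot,n)$ stay bounded by a constant independent of $n$, and the quantitative Voronovskaja inequality (Theorem \ref{t2.7}), already proved for the nonpositive regime via the splitting over $\mathcal K_1,\mathcal K_2$, supplies the required $\|g''\|/n$ error for smooth functions.
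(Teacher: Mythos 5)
Your proposal is correct and follows essentially the same route as the paper: the uniform bound $\|D_n^{M,1}\|\le |a(x,n)|+|a(1-x,n)|$ from (\ref{eG3}), the smooth-function estimate $\|D_n^{M,1}g-g\|\le C_2\|g''\|/n$ obtained from the quantitative Voronovskaja inequality (the paper cites its relations (\ref{e27}) and (\ref{Xt}), i.e.\ the content of Theorem \ref{t2.7}) together with the second central moment from Lemma \ref{lema2} and $\omega(g'',1/\sqrt n)\le 2\|g''\|$, and finally the $K$-functional/$\omega_2$ equivalence after taking the infimum over $g\in C^2[0,1]$. No substantive difference from the paper's argument.
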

\begin{proof}
From (\ref{eG3})	 we observe that $D_n^{M,1}:C[0,1]\to C[0,1]$ is a bounded operator, namely
	\begin{equation*}
		\|D_n^{M,1}f\|_{C[0,1]}\leq \left(|a_0(n)|+|a_1(n)|\right)\|f\|_{C[0,1]}.
	\end{equation*}
	Using relations (\ref{e27}) and (\ref{Xt}) we can write
	\begin{equation}
	\label{Y}
	 \left|D_n^{M,1}(g;x)-g(x)\right|\leq\displaystyle\frac{1}{2}D_n^{M,1}\left((t-x)^2;x\right)|g^{\prime\prime}(x)|+C_1\frac{1}{n}\omega\left(g^{\prime\prime},\frac{1}{\sqrt{n}}\right),
	\end{equation}
where $C_1$ is a positive constant independent of $n$ and $x$.
From (\ref{Y}), Lemma \ref{lema2} and the property $\omega(h,\delta)\leq 2\| h\|_{C[0,1]}$ it follows
$$ \|D_{n}^{M,1}g-g\|_{C[0,1]}\leq C_2\frac{1}{n}\|g^{\prime\prime}\|_{C[0,1]}, $$	
where $C_2$ is a positive constant independent of $n$ and $x$.
Therefore,
\begin{align}
&\|D_n^{M,1}f-f \|_{C[0,1]}\leq\| D_n^{M,1}(f-g)-(f-g) \|_{C[0,1]}+\| D_n^{M,1}g-g \|_{C[0,1]}\nonumber\\
&\leq C_3\|f-g\|_{C[0,1]}+C_2\frac{1}{n}\| g^{\prime\prime}\|_{C[0,1]}\leq C\left\{\|f-g  \|+\frac{1}{n}\| g^{\prime\prime}\|  \right\},\label{V}
\end{align}	
where $C$ and $C_3$ are some positive constants independent of $n$ and $x$.
	
It is known that the second order moduli $\omega_2(f,t)$ is  equivalent to the following $K$-functional
$$ K_2(f,t^2):=\inf_{f\in C^2[0,1]}\left\{\|f-g\|_{C[0,1]}+t^2\|g^{\prime\prime}\|_{C[0,1]}\right\}. $$
More precisely from \cite[Corrollary 2.7]{G}, we have
\begin{equation*}
K_2(f,t^2)\leq \displaystyle\frac{7}{2}\omega_2(f,t), \, t\geq 0,\, f\in C[0,1].
\end{equation*}

If we take the infimum over all $g\in C^2[0,1]$ in relation (\ref{V}) we complete the proof of Theorem \ref{t2.8}.
\end{proof}
\begin{remark}
Obviously Theorem \ref{t2.8} is better than Theorem \ref{t1.6} because now we have estimate in $\omega_2$ instead of $\omega_1$. Also, we observe that for $e_0, e_1, e_2$ in place of $g$ in Theorem \ref{t2.7} in both sides of the inequality we have 0.
\end{remark}

\section{Approximation by Durrmeyer operators of order II }
In this section we will extend the previous results considering a new Durrmeyer operators that have order of approximation $\displaystyle{\mathcal O}\left(\frac{1}{n^2}\right)$, as follows:
\begin{equation}\label{K2}
D_n^{M,2}(f;x)=(n+1)\displaystyle\sum_{k=0}^np_{n,k}^{M,2}(x)\displaystyle\int_0^1 p_{n,k}(t)f(t)dt,
\end{equation}
where
\begin{equation}
\label{e38p}
p_{n,k}^{M,2}(x)=b(x,n)p_{n-2,k}(x)+d(x,n)p_{n-2,k-1}(x)+b(1-x,n)p_{n-2,k-2}(x) \end{equation}
and
$$ b(x,n)=b_2(n)x^2+b_1(n)x+b_0(n),\,\,d(x,n)=d_0(n)x(1-x),  $$
where $b_i(n),i=0,1,2$ and $d_0(n)$ are two unknown sequences which are determined in appropriate way. For $b_2(n)=b_0(n)=1,$ $b_1(n)=-2$, $d_0(n)=2$ obviously, (\ref{e38p}) reduces to (\ref{e2}).
\begin{lemma}
By simple computation, we have
\begin{itemize}
	\item [i)] $ D_n^{M,2}(e_0;x)=(2b_2(n)-d_0(n))x^2-(2b_2(n)-d_0(n))x+2b_0(n)+b_1(n)+b_2(n);$
	\item [ii)] $ D_n^{M,2}(e_1;x)=\displaystyle\frac{1}{n\!+\!2}\left\{\left[(2b_2(n)\!-\!d_0(n))x^3\!+\!(-2b_2(n)\!+\!d_0(n))x^2\!+\!(2b_0(n)\!+\!b_1(n)\!+\!b_2(n))x\right]n\right.\\
 (-4b_2(n)+2d_0(n))x^3+(8b_2(n)-4d_0(n))x^2+(-4b_0(n)-4b_1(n)-8b_2(n)+2d_0(n))x\\
	\left. +4b_0(n)+3b_1(n)+3b_2(n)	\right\};$
	\item[iii)] $ D_n^{M,2}(e_2;x)=\displaystyle\frac{1}{(n+2)(n+3)}\left\{\left[
	(2b_2(n)-d_0(n))x^4+(-2b_2(n)+d_0(n))x^3
	\right.\right.\\
	\left. +(2b_0(n)+b_1(n)+b_2(n))x^2\right]n^2+\left[(-10b_2(n)+5d_0(n))x^4+(22b_2(n)-11d_0(n))x^3\right.\\
	\left. +(-10b_0(n)-9b_1(n)-21b_2(n)+6d_0(n))x^2+(12b_0(n)+8b_1(n)+8b_2(n))x\right]n\\
	+(12b_2(n)-6d_0(n))x^4+(-36b_2(n)+18d_0(n))x^3+(12b_0(n)+14b_1(n)+52b_2(n)-18d_0(n))x^2\\
	\left.+(-24b_0(n)-26b_1(n)-40b_2(n)+6d_0(n))x+14b_0(n)+12b_1(n)+12b_2(n)\right\}.$
\end{itemize}
\end{lemma}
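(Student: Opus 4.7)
The plan is to reduce the computation to two ingredients that are already standard: the Beta-integral evaluation of $(n+1)\int_0^1 p_{n,k}(t)\,t^m\,dt$, and the first two Bernstein moments of degree $n-2$. First, I would recall that for $m=0,1,2$,
\begin{equation*}
(n+1)\int_0^1 p_{n,k}(t)\,t^m\,dt=\frac{(k+1)(k+2)\cdots(k+m)}{(n+2)(n+3)\cdots(n+m+1)},
\end{equation*}
which yields $1$, $(k+1)/(n+2)$ and $(k+1)(k+2)/[(n+2)(n+3)]$, respectively. Call these expressions $q_m(k)$; they are polynomials in $k$ of degree $m$, and they carry all of the $t$-integration into closed form.

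Next, I would substitute (\ref{e38p}) into (\ref{K2}) and decompose
\begin{equation*}
D_n^{M,2}(e_m;x)=b(x,n)\,S_m^{(0)}(x)+d(x,n)\,S_m^{(1)}(x)+b(1-x,n)\,S_m^{(2)}(x),
\end{equation*}
where $S_m^{(r)}(x):=\sum_{k=0}^n p_{n-2,k-r}(x)\,q_m(k)$ for $r=0,1,2$. Reindexing $j=k-r$, and noting that $p_{n-2,j}\equiv 0$ outside $0\le j\le n-2$ so no boundary corrections appear, each $S_m^{(r)}$ becomes $\sum_{j=0}^{n-2}p_{n-2,j}(x)\,q_m(j+r)$, which is a linear combination of Bernstein moments of degree $n-2$.

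Third, I would evaluate these using $\sum_j p_{n-2,j}(x)=1$, $\sum_j j\,p_{n-2,j}(x)=(n-2)x$ and $\sum_j j^2\,p_{n-2,j}(x)=(n-2)(n-3)x^2+(n-2)x$, which are immediate from the standard Bernstein identities. Finally, I would plug in $b(x,n)=b_2(n)x^2+b_1(n)x+b_0(n)$, $b(1-x,n)=b_2(n)(1-x)^2+b_1(n)(1-x)+b_0(n)$ and $d(x,n)=d_0(n)x(1-x)$, and collect by powers of $x$ and $n$ to match the three stated expressions.

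The only real obstacle is bookkeeping. For $m=0$ the three sums collapse at once, giving $(2b_2(n)-d_0(n))(x^2-x)+2b_0(n)+b_1(n)+b_2(n)$ almost immediately. For $m=1$ the computation is manageable. For $m=2$, however, one obtains a quartic in $x$ whose coefficients are quadratic polynomials in $n$ times linear combinations of the four sequences $b_0(n), b_1(n), b_2(n), d_0(n)$; a careful term-by-term expansion (or a short symbolic computation) is required to verify the closed form in \emph{iii)}. No conceptual difficulty is expected beyond this algebra.
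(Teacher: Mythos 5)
Your proposal is correct: the Beta-integral evaluation $(n+1)\int_0^1 p_{n,k}(t)t^m\,dt=\frac{(k+1)\cdots(k+m)}{(n+2)\cdots(n+m+1)}$ combined with the reindexed degree-$(n-2)$ Bernstein moments reproduces all three stated formulas (I checked i) and ii) explicitly, and iii) is the same bookkeeping). The paper gives no written proof beyond ``by simple computation,'' and your computation is exactly the standard argument that phrase refers to, so there is nothing to add.
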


In order to study the uniform convergence we set $D_n^{M,2}(e_0;x)=1$, and this yields:
$$
2b_2(n)-d_0(n)=0,\,\,
b_2(n)+2b_0(n)+b_1(n)=1.
$$
Using the above relations we obtain
\begin{align*}
D_n^{M,2}(e_1;x)&=x+\displaystyle\frac{(4b_0(n)-6)x+3-2b_0(n)}{n+2};\\
D_n^{M,2}(e_2;x)&=x^2+\displaystyle\frac{2}{(n+2)(n+3)}\left\{4b_0(n)nx^2-2b_0(n)nx-10b_0(n)x^2-b_1(n)x^2-7nx^2\right.\\
& \left. +16b_0(n)x+b_1(n)x+4nx+5x^2-5b_0(n)-14x+6  \right\}.
\end{align*}
In order to have $\displaystyle\lim_{n\to\infty}D_n^{M,2}(e_i;x)=x^i$, $i=0,1,2$ we consider the sequences $b_0(n)$ and $b_1(n)$ to verify the conditions
$$ \displaystyle\lim_{n\to\infty}\frac{b_0(n)}{n}=0\textrm{ and } \displaystyle\lim_{n\to\infty}\frac{b_1(n)}{n^2}=0.  $$
We propose our analysis for the case $b_0(n)=\displaystyle\frac{3}{2}$ and $b_1(n)=-n$, therefore $b_2(n)=n-2$ and $d_0(n)=2(n-2)$.

With the above choices the operator (\ref{K2}) becomes
\begin{equation}\label{TK2} \tilde{D}_n^{M,2}(f;x)=(n+1)\displaystyle\sum_{k=0}^n\tilde{p}_{n,k}^{M,2}(x)\displaystyle\int_0^1p_{n,k}(t)f(t)dt,\end{equation}
where
\begin{align*} \tilde{p}_{n,k}^{M,2}(x)&=\left((n-2)x^2-nx+\frac{3}{2}\right)p_{n-2,k}(x)+2(n-2)x(1-x)p_{n-2,k-1}(x)\\
&+\left((n-2)x^2-(n-4)x-\displaystyle\frac{1}{2}\right)p_{n-2,k-2}(x). \end{align*}
Note that other choices for sequences $b_0(n)$ and $b_1(n)$ lead to some operators with order of approximation either one or two. In the following we are concerning to study the uniform convergence of the operator (\ref{TK2}).

\begin{lemma}
The moments of the  operators (\ref{TK2}) are given by
\begin{itemize}
	\item[i)] $\tilde{D}_n^{M,2}(e_0;x)=1$;
	\item[ii)] $\tilde{D}_n^{M,2}(e_1;x)=x$;
	\item[iii)] $\tilde{D}_n^{M,2}(e_2;x)=x^2+\displaystyle\frac{20x(1-x)}{(n+2)(n+3)}-\displaystyle\frac{3}{(n+2)(n+3)}$.
\end{itemize}
\end{lemma}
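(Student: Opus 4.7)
The plan is to read this off as a direct specialization of the moments computed in the preceding (unnumbered) lemma, which already expresses $D_n^{M,2}(e_i;x)$ as explicit polynomials in $x$ whose coefficients are (affine) functions of $b_0(n), b_1(n), b_2(n), d_0(n)$. With the chosen values $b_0(n)=3/2$, $b_1(n)=-n$, $b_2(n)=n-2$, $d_0(n)=2(n-2)$, the three claimed identities should fall out by substitution and collection of terms.

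First I would record the two ``structural'' simplifications driving everything. The design constraints $2b_2(n)-d_0(n)=0$ and $2b_0(n)+b_1(n)+b_2(n)=1$ are satisfied: indeed $2(n-2)-2(n-2)=0$ and $3+(-n)+(n-2)=1$. The first identity annihilates every $x^3$ and $x^4$ coefficient appearing throughout the previous lemma's formulas (all of them are multiples of $2b_2-d_0$), so only terms of $x$-degree at most $2$ survive. The second identity immediately yields $\tilde{D}_n^{M,2}(e_0;x)=1$.

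For $e_1$, after the $x^3$ and $x^2$ pieces in the bracket collapse to zero, the $n$-coefficient reduces to $(2b_0+b_1+b_2)x=x$. The constant-in-$n$ pieces simplify to $-4b_0-4b_1-8b_2+2d_0 = 2$ times $x$ (plus a vanishing constant $4b_0+3b_1+3b_2=0$), so the numerator is $(n+2)x$ and dividing by $n+2$ gives $\tilde{D}_n^{M,2}(e_1;x)=x$.

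The $e_2$ case is the only one requiring actual bookkeeping and is where I expect the main (though still routine) obstacle: because the $b_i(n)$ themselves are now polynomials in $n$, the ``constant-in-$n$'' block $C$ of the previous lemma's formula contributes additional linear-in-$n$ terms after substitution, which must be combined with the genuine $Bn$ block. Concretely, I would compute the three blocks separately, obtaining $A=x^2$, $B=3x^2+2x$, and $C=(2n-14)x^2+(-2n+20)x-3$. Summing $An^2+Bn+C$ the cross terms $2nx$ and $-2nx$ cancel, and the numerator becomes $x^2(n^2+5n-14)+20x-3$. Finally writing $n^2+5n-14=(n+2)(n+3)-20$ rearranges this into $x^2(n+2)(n+3)+20x(1-x)-3$, which after dividing by $(n+2)(n+3)$ is exactly the claimed expression.
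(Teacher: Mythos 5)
Your proposal is correct: all three substitutions check out (in particular the blocks $A=x^2$, $B=3x^2+2x$, $C=(2n-14)x^2+(-2n+20)x-3$ and the factorization $n^2+5n-14=(n+2)(n+3)-20$ are right), and this is essentially the paper's own route — the paper states the lemma without proof, having just derived the simplified moments under $2b_2(n)-d_0(n)=0$ and $b_2(n)+2b_0(n)+b_1(n)=1$ and then plugged in $b_0(n)=\frac{3}{2}$, $b_1(n)=-n$. Your only cosmetic difference is substituting directly into the fully general moment formulas rather than into the paper's intermediate expressions in $b_0(n)$, $b_1(n)$, which changes nothing substantive.
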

\begin{lemma}
The central moments of the operators (\ref{TK2}) are given by
\begin{itemize}
	\item [i)] $\tilde{D}_n^{M,2}\left((t-x)^2;x\right)=\displaystyle\frac{20x(1-x)}{(n+2)(n+3)}-\displaystyle\frac{3}{(n+2)(n+3)}$,
	\item[ii)] $\tilde{D}_n^{M,2}\left((t-x)^3;x\right)=\displaystyle\displaystyle\frac{3(1-2x)}{(n+2)(n+3)(n+4)}\left\{-4nx(1-x)-48x^2+48x-7\right\}$,
	\item[iii)] $\tilde{D}_n^{M,2}\left((t-x)^4;x\right)=-\displaystyle\frac{12x^2(1-x)^2n^2}{(n+2)(n+3)(n+4)(n+5)}+{\cal O}\left(\frac{1}{n^3}\right)$,
	\item[iv)] $\tilde{D}_n^{M,2}\left((t-x)^5;x\right)=-\displaystyle\frac{360(1-2x)x^2(1-x)^2n^2}{(n+2)(n+3)(n+4)(n+5)(n+6)}+{\cal O}\left(\displaystyle\frac{1}{n^4}\right)$,
	\item[v)] $\tilde{D}_n^{M,2}\left((t-x)^6;x\right)=-\displaystyle\frac{240x^3(1-x)^3n^3}{(n+2)(n+3)(n+4)(n+5)(n+6)(n+7)}+{\cal O}\left(\displaystyle\frac{1}{n^4}\right)$.
\end{itemize}
\end{lemma}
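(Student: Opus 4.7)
The plan is to reduce central moments to ordinary moments through the binomial identity
$\tilde{D}_n^{M,2}((t-x)^m;x)=\sum_{j=0}^m \binom{m}{j}(-x)^{m-j}\tilde{D}_n^{M,2}(e_j;x)$,
so that (i)--(v) follow once the ordinary moments $\tilde{D}_n^{M,2}(e_j;x)$ are known for $j\le 6$. The values for $j\le 2$ are furnished by the preceding lemma, so (i) drops out by direct substitution, and the remaining task is to compute $\tilde{D}_n^{M,2}(e_j;x)$ for $j=3,4,5,6$.

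For the higher-degree ordinary moments I would exploit the splitting (\ref{e38p}). Writing $\tilde{D}_n^{M,2}(e_m;x)=b(x,n)\,U_n^{(0)}(x)+d(x,n)\,U_n^{(1)}(x)+b(1-x,n)\,U_n^{(2)}(x)$ with
$U_n^{(i)}(x):=(n+1)\sum_{k}p_{n-2,k-i}(x)\int_0^1 p_{n,k}(t)\,t^m\,dt$,
and using the Beta-function identity $\int_0^1 p_{n,k}(t)\,t^m\,dt=\tfrac{n!(k+m)!}{k!(n+m+1)!}$, each $U_n^{(i)}$ becomes a finite rational-in-$n$ combination of sums of the form $\sum_{k}p_{n-2,k}(x)\,k^r$ (after the reindexing $k\mapsto k+i$), which are the classical raw Bernstein-basis moments and have closed-form polynomial expressions in $x$. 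Assembling the three pieces and inserting the explicit values $b_0(n)=3/2$, $b_1(n)=-n$, $b_2(n)=n-2$, $d_0(n)=2(n-2)$ gives explicit polynomials for $\tilde{D}_n^{M,2}(e_j;x)$, $j=3,4,5,6$. Feeding these into the binomial identity produces (ii)--(v).

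A useful shortcut drastically trims the work for (iii)--(v): only the leading term in $1/n$ is required, so any $o(n^{-r})$ contribution may be discarded before the binomial combination is performed. Since $\tilde{D}_n^{M,2}$ already reproduces $e_0$ and $e_1$, the lower-order-in-$j$ summands cancel in the leading balance, so the asymptotics of $\tilde{D}_n^{M,2}((t-x)^{2r};x)$ are essentially dictated by $\tilde{D}_n^{M,2}(e_{2r};x)-x^{2r}$, which explains the $n^{-2}$ order in (iii)--(iv) and $n^{-3}$ in (v). The genuine obstacle is not any single step but the sheer bulk of the polynomial bookkeeping for $m\in\{4,5,6\}$ and the need to track the cancellations that produce the remarkable negative sign and the correct combinatorial coefficients ($-12$, $-360$, $-240$) in the leading terms; a symbolic algebra system is practically indispensable here, although every step is a routine linear combination.
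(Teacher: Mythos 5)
Your core plan --- expand the central moments binomially, obtain the raw moments $\tilde{D}_n^{M,2}(e_j;x)$, $j\le 6$, from the Beta integral $\int_0^1 p_{n,k}(t)\,t^m\,dt=\frac{n!\,(k+m)!}{k!\,(n+m+1)!}$ together with the closed forms of the Bernstein-basis sums, insert $b_0(n)=\frac32$, $b_1(n)=-n$, $b_2(n)=n-2$, $d_0(n)=2(n-2)$, and only then extract the asymptotics --- is exactly the direct computation that underlies the paper's lemma (which is stated without proof), and carried out exactly it does yield (i)--(v).

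The one genuine flaw is the heuristic you attach to your shortcut, which is false and, if used literally to truncate early, would break (v). Writing $\tilde{D}_n^{M,2}\bigl((t-x)^m;x\bigr)=\sum_{j=2}^m\binom{m}{j}(-x)^{m-j}\bigl(\tilde{D}_n^{M,2}(e_j;x)-x^j\bigr)$ (the $j=0,1$ terms drop out because $e_0,e_1$ are reproduced), every deviation $\tilde{D}_n^{M,2}(e_j;x)-x^j$ with $j\ge 2$ is of order $n^{-2}$ --- not smaller --- so all summands enter at the same order; it is not true that the behaviour is ``essentially dictated by $\tilde{D}_n^{M,2}(e_{2r};x)-x^{2r}$'' with the lower-$j$ terms cancelling. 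In particular the $n^{-3}$ order in (v) means the $n^{-2}$ contributions cancel completely across the whole binomial combination, even though $\tilde{D}_n^{M,2}(e_6;x)-x^6$ itself is of order $n^{-2}$; the same mechanism produces the negative signs and the coefficients $-12,\,-360,\,-240$. Hence you cannot decide what is negligible term by term before combining: to certify the leading term and the stated remainders you must carry every raw moment exactly to the target accuracy of the final answer ($n^{-3}$ for the leading term of (v), $n^{-4}$ for the remainder claims in (iv)--(v)); in practice, keep the raw moments as exact rational functions of $n$ --- as your main plan does --- and expand only at the very end.
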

\begin{theorem}
If $f\in C^{6}[0,1]$ and $x\in [0,1]$, then for sufficiently large $n$, we have
$$ \tilde{D}_n^{M,2}(f;x)-f(x)={\cal O}\left(\displaystyle\frac{1}{n^2}\right). $$
\end{theorem}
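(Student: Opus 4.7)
The plan is a Voronovskaja-type argument pushed to order six, chosen because the polynomial terms then all fall within $\mathcal{O}(1/n^{2})$ and the smoothness $f\in C^{6}[0,1]$ supplies enough control over the remainder.

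First I would use the continuity of $f^{(6)}$ to write the Taylor expansion
$$
f(t)=\sum_{k=0}^{6}\frac{f^{(k)}(x)}{k!}(t-x)^{k}+\rho(t,x)(t-x)^{6},
$$
with $\rho(\cdot,x)\in C[0,1]$ and $\rho(x,x)=0$. Applying the linear operator $\tilde D_n^{M,2}$ and using the moment identities $\tilde D_n^{M,2}(e_0;x)=1$, $\tilde D_n^{M,2}(e_1;x)=x$ to collapse the $k=0,1$ contributions into $f(x)$ yields
$$
\tilde D_n^{M,2}(f;x)-f(x)=\sum_{k=2}^{6}\frac{f^{(k)}(x)}{k!}\,\tilde D_n^{M,2}\!\left((t-x)^{k};x\right)+\tilde D_n^{M,2}\!\left(\rho(t,x)(t-x)^{6};x\right).
$$
Inspection of the explicit central-moment formulas gives $\tilde D_n^{M,2}((t-x)^{k};x)=\mathcal{O}(1/n^{2})$ for $k=2,3,4$ (numerators of degrees $0,1,2$ in $n$ against denominators of degrees $2,3,4$) and $\mathcal{O}(1/n^{3})$ for $k=5,6$. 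Combined with the boundedness of $f^{(k)}$ on $[0,1]$, the polynomial sum is therefore a uniform $\mathcal{O}(1/n^{2})$.

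The substantive issue is the remainder $\tilde D_n^{M,2}(\rho(t,x)(t-x)^{6};x)$: with the present choice $b_{0}(n)=3/2,\;b_{1}(n)=-n$, the coefficients $b(x,n)$ and $d(x,n)$ are of order $n$ and of mixed sign, so $\tilde D_n^{M,2}$ is not a positive linear operator and the classical Cauchy--Schwarz estimate is unavailable. I would handle this by the decomposition technique already used in the proof of Theorem~\ref{t2.5}: split $\tilde p_{n,k}^{M,2}$ according to the signs of its three building blocks in (\ref{e38p}), so that
$$
\bigl|\tilde D_n^{M,2}(\rho(t,x)(t-x)^{6};x)\bigr|\;\le\;(n+1)\sum_{k}\bigl|\tilde p_{n,k}^{M,2}(x)\bigr|\int_{0}^{1}p_{n,k}(t)\,|\rho(t,x)|(t-x)^{6}\,dt,
$$
which is a positive functional. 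Given $\varepsilon>0$, pick $\delta>0$ with $|\rho(t,x)|<\varepsilon$ for $|t-x|<\delta$: on the near region $|t-x|<\delta$ the factor $\varepsilon$ combines with the sixth-moment estimate for the associated positive operator (at worst $\mathcal{O}(1/n^{2})$, since the $\mathcal{O}(n)$ growth of $|b(x,n)|,|d(x,n)|,|b(1-x,n)|$ is absorbed by the $\mathcal{O}(1/n^{3})$ classical sixth Bernstein--Durrmeyer moment) to yield a contribution of size $\varepsilon\cdot\mathcal{O}(1/n^{2})$; on the far region $|t-x|\ge\delta$ one uses the bound $|\rho(t,x)(t-x)^{6}|\le\|\rho\|_{\infty}\delta^{-2m}(t-x)^{6+2m}$ for a sufficiently large $m$ and the corresponding higher central moment to obtain a strictly smaller order. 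Letting $\varepsilon\to 0$ produces $\tilde D_n^{M,2}(\rho(t,x)(t-x)^{6};x)=o(1/n^{2})$, and combining this with the polynomial estimate gives the claimed rate. The main obstacle throughout is precisely this non-positivity: the polynomial-part estimate is routine once the central moments are in hand, but carrying out the splitting argument with coefficients that grow like $n$ requires care, since the $\mathcal{O}(1/n^{2})$ budget is exactly matched by the $\mathcal{O}(n)$ coefficient growth against the $\mathcal{O}(1/n^{3})$ sixth-moment decay.
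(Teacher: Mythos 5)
Your proposal is correct, and its skeleton — sixth-order Taylor expansion, the moment lemma to make the terms $k=2,\dots,6$ uniformly ${\cal O}(1/n^2)$, and a positivization of $\tilde D_n^{M,2}$ via $|\tilde p_{n,k}^{M,2}(x)|$ combined with an $\varepsilon$--$\delta$ localization for the remainder — is the same as the paper's. Where you genuinely diverge is in how the localization is carried out. The paper splits the \emph{summation index} into ${\mathcal K}_1=\{k:|k/n-x|<\delta\}$ and ${\mathcal K}_2=\{k:|k/n-x|\ge\delta\}$, estimating the near part by the explicit sixth-moment sums appearing in \eqref{ec1} and the far part by the discrete moment $\sum_k p_{n-2,k-j}(x)\left(\frac{k}{n}-x\right)^6$ as in \eqref{ec2}; you instead split the \emph{integration variable} $t$, bounding the near part by $\varepsilon$ times the sixth moment of the positivized operator and the far part by $\|\rho\|_\infty\delta^{-2m}$ times a moment of order $6+2m$. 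Your route is logically a bit cleaner (the condition $|\rho(t,x)|<\varepsilon$ is used exactly where it holds, whereas the paper's near-index step invokes $|\theta(t,x)|<\varepsilon$ for all $t$ in the integral), but it buys this at the price of moment estimates not stated in the paper's lemmas: you need, for the shifted sums $j=0,1,2$, a bound of the type $(n+1)\sum_k p_{n-2,k-j}(x)\int_0^1 p_{n,k}(t)(t-x)^{6+2m}\,dt={\cal O}\left(n^{-(3+m)}\right)$, so that after multiplication by the ${\cal O}(n)$ coefficients $|b(x,n)|,|d(x,n)|,|b(1-x,n)|$ the far contribution is ${\cal O}\left(n^{-(2+m)}\right)$; already $m=1$ suffices, and these bounds follow from the same Durrmeyer-moment computations the paper performs for order six, but they should be stated and verified rather than only alluded to as ``the corresponding higher central moment'' (and note it is the moments of the positivized operator, not the signed central moments of $\tilde D_n^{M,2}$, that you must control). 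With that supplement your argument is complete and in fact yields the slightly stronger $o(1/n^2)$ for the remainder term, while the paper obtains the same ${\cal O}(1/n^2)$ conclusion from explicit closed-form sums.
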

\begin{proof} Applying the Durrmeyer operators $\tilde{D}_n^{M,2}$ to the Taylor's formula, we obtain
$$ \tilde{D}_n^{M,2}(f;x)-f(x)=\displaystyle\sum_{k=1}^6 f^{(k)}(x)\tilde{D}_n^{M,2}\left( (t-x)^k;x\right)+\tilde{D}_n^{M,2}\left(\theta(t,x)(t-x)^6;x\right),  $$
where $\displaystyle\lim_{t\to x}\theta(t,x)=0$.

We have
\begin{align}
\tilde{D}_n^{M,2}(f;x)&=(n+1)\left[(n-2)x^2-nx+\frac{3}{2}\right]\sum_{k=0}^{n-2}p_{n-2,k}(x)\int_0^1p_{n,k}(t)f(t)dt\nonumber\\
&+2(n+1)(n-2)x(1-x)\sum_{k=1}^{n-1}p_{n-2,k-1}(x)\int_0^1p_{n,k}(t)f(t)dt\nonumber\\
&+(n+1)\left[(n-2)x^2-(n-4)x-\frac{1}{2}\right]\sum_{k=2}^np_{n-2,k-2}(x)\int_0^1p_{n,k}(t)f(t)dt.
\label{xx}
\end{align}	
Let $\varepsilon>0$ be given. There exist  $\delta>0$ such that if $|t-x|<\delta$ then $|\theta(t,x)|<\varepsilon$. We denote
$${\mathcal K}_1=\left\{k:\left|\frac{k}{n}-x\right|<\delta, k=0,1,2,\cdots ,n\right\} \textrm{ and }
{\mathcal K}_2=\left\{k:\left|\frac{k}{n}-x\right|\ge \delta, k=0,1,2,\cdots ,n\right\}.$$
Let $k\in {\mathcal K}_1$. Since $\left| \theta(t,x)\right| < \varepsilon $,  from (\ref{xx}) we obtain
\begin{align}
&\left| \tilde{D}_n^{M,2}\left(\theta(t,x)(t-x)^6;x\right) \right|\nonumber\\
& \leq
(n+1)\left|(n-2)x(x-1)-2x+\frac{3}{2}\right|\sum_{k=0}^{n-2}p_{n-2,k}(x)\int_0^1p_{n,k}(t)|\theta(t,x)|(t-x)^6dt \nonumber\\
&+2(n+1)(n-2)x(1-x)\sum_{k=1}^{n-1}p_{n-2,k-1}(x)\int_0^1p_{n,k}(t)|\theta(t,x)|(t-x)^6dt\nonumber\\
&+(n+1)\left|(n-2)x(x-1)+2x-\frac{1}{2}\right|\sum_{k=2}^np_{n-2,k-2}(x)\int_0^1p_{n,k}(t)|\theta(t,x)|(t-x)^6dt\nonumber\\
&\leq \left[\displaystyle\frac{1}{4}(n+1)(n-2)+\frac{3}{2}(n+1)\right]\varepsilon\sum_{k=0}^{n-2}p_{n-2,k}(x)\int_0^1p_{n,k}(t)(t-x)^6dt \nonumber\\
&+\displaystyle\frac{1}{2}(n+1)(n-2)\varepsilon\sum_{k=1}^{n-1}p_{n-2,k-1}(x)\int_0^1p_{n,k}(t)(t-x)^6dt\nonumber\\
&+\left[\displaystyle\frac{1}{4}(n-2)(n+1)+\frac{3}{2}(n+1)\right]\varepsilon\sum_{k=2}^np_{n-2,k-2}(x)\int_0^1p_{n,k}(t)(t-x)^6dt\nonumber\\
&=\displaystyle\frac{1}{4}(n+1)(n-2)\varepsilon\left\{\sum_{k=0}^{n-2}p_{n-2,k}(x)\int_0^1p_{n,k}(t)(t-x)^6 dt\right.\nonumber\\
&+\left. 2\sum_{k=1}^{n-1}p_{n-2,k-1}(x)\int_0^1p_{n,k}(t)(t-x)^6dt+\sum_{k=2}^np_{n-2,k-2}(x)\int_0^1p_{n,k}(t)(t-x)^6dt\right\}\nonumber\\
&+\displaystyle\frac{3}{2}(n+1)\varepsilon\left\{ \sum_{k=0}^{n-2}p_{n-2,k}(x)\int_0^1p_{n,k}(t)(t-x)^6 dt +\sum_{k=2}^{n}p_{n-2,k-2}(x)\int_0^1p_{n,k}(t)(t-x)^6 dt\right\}\nonumber\\
&=\displaystyle\frac{1}{4}(n+1)(n-2)\varepsilon\left\{\frac{480x^3(1-x)^3n^3}{(n+1)(n+2)(n+3)(n+4)(n+5)(n+6)(n+7)}+{\mathcal O}\left(\frac{1}{n^5}\right)\right\}\nonumber\\
&+\displaystyle\frac{3}{2}(n+1)\varepsilon\left\{\frac{240x^3(1-x)^3n^3}{(n+1)(n+2)(n+3)(n+4)(n+5)(n+6)(n+7)}+{\mathcal O}\left(\frac{1}{n^5}\right)\right\}\nonumber\\
&=\frac{120x^3(1-x)^3n^3(n+1)\varepsilon}{(n+2)(n+3)(n+4)(n+5)(n+6)(n+7)}+{\mathcal O}\left(\frac{1}{n^3}\right)={\cal O}\left(\frac{1}{n^2} \right).\label{ec1}
\end{align}
Let $k\in {\mathcal K}_2$. We denote $M=\displaystyle\sup_{0\leq t\leq 1} |\theta(t,x)|(t-x)^6$. Then $\left| \theta(t,x) \right|(t-x)^6\leq\displaystyle \frac{M}{\delta^6}\left(\frac{k}{n}-x \right)^6$. We get the following upper bound
\begin{align}
&\left| \tilde{D
}_n^{M,2}\left(\theta(t,x)(t-x)^6;x\right) \right|\leq \left[\displaystyle\frac{1}{4}(n-2)+\frac{3}{2}\right]\frac{M}{\delta^6}\sum_{k=0}^{n-2}p_{n-2,k}(x)\left(\displaystyle\frac{k}{n}-x\right)^6\nonumber\\
&+\frac{1}{2}(n-2)\frac{M}{\delta^6}\sum_{k=1}^{n-1}p_{n-2,k-1}(x)\left(\displaystyle\frac{k}{n}-x\right)^6 +\left[\displaystyle\frac{1}{4}(n-2)+\frac{3}{2}\right]\frac{M}{\delta^6}\sum_{k=2}^{n}p_{n-2,k-2}(x)\left(\displaystyle\frac{k}{n}-x\right)^6\nonumber\end{align}\begin{align}
&=\displaystyle\frac{15M(n+1)x^3(1-x)^3}{\delta^6n^3}+{\mathcal O}\left(\frac{1}{n^3}\right)={\cal O}\left(\frac{1}{n^2} \right).\label{ec2}
\end{align}
From (\ref{ec1}) and (\ref{ec2}) the proof of theorem is completed.
\end{proof}

\section{Durrmeyer operators of order III }

Using the approach as in the previous section we construct a third order approximation formula. Let us consider the following modification of Durrmeyer operators:
\begin{equation}\label{K3}
D_n^{M,3}(f;x)=(n+1)\displaystyle\sum_{k=0}^n p_{n,k}^{M,3}(x) \int_{0}^{1}p_{n,k}(t)f(t)dt,
\end{equation}
where
\begin{align*} p_{n,k}^{M,3}(x)&=\tilde{b}(x,n)p_{n-4,k}(x)+\tilde{d}(x,n)p_{n-4,k-1}(x)+\tilde{e}(x,n)p_{n-4,k-2}(x)\\
&+\tilde{d}(1-x,n)p_{n-4,k-3}(x)+\tilde{b}(1-x,n)p_{n-4,k-4}(x) \end{align*}
and
\begin{align*}
& \tilde{b}(x,n)=\tilde{b}_4(n)x^4+\tilde{b}_3(n)x^3+\tilde{b}_2(n)x^2+\tilde{b}_1(n)x+\tilde{b}_0(n),\\
& \tilde{d}(x,n)=\tilde{d}_4(n)x^4+\tilde{d}_3(n)x^3+\tilde{d}_2(n)x^2+\tilde{d}_1(n)x+\tilde{d}_0(n),\\
&\tilde{e}(x,n)=\tilde{e}_0(n)(x(1-x))^2.
\end{align*}
We note that $b_i(n)$, $d_i(n)$, $i=0,1,\dots,4$ and $e_0(n)$ are some unknown sequences which are determined in appropriate way. Let $\tilde{D}_n^{M,3}$ be the operator (\ref{K3}) with the following sequences:
\begin{align*}
& \tilde{b}_0(n)=\displaystyle\frac{10}{3},\,\,\tilde{b}_1(n)=-\frac{53}{3}-\frac{11}{6}n,\,\, \tilde{b}_2(n)=\frac{1}{2}n^2+\frac{29}{6}n+\frac{89}{3},\,\, \tilde{b}_3(n)=-n^2-3n-16,\,\,\tilde{b}_4(n)=\frac{1}{2}n^2,\\
& \tilde{d}_0(n)=\displaystyle-\frac{10}{3},\,\,\tilde{d}_1(n)=\frac{80}{3}+\frac{10}{3}n,\,\,\tilde{d}_2(n)=-2n^2-\frac{28}{3}n-\frac{161}{3},\,\, \tilde{d}_3(n)=4n^2+6n+32,\\
&\tilde{d}_4(n)=-2n^2,\,\, \tilde{e}_0(n)=3n^2.
\end{align*}
\begin{lemma}
The central moments of the operators $\tilde{D}_n^{M,3}$ are given by
\begin{itemize}
	\item [i)] $\tilde{D}_n^{M,3}(t-x;x)=\tilde{D}_n^{M,3}((t-x)^2;x)=\tilde{D}_n^{M,3}((t-x)^3;x)=0$;
	\item [ii)] $\tilde{D}_n^{M,3}((t-x)^4;x)=\displaystyle \frac{20x(1-x)(21x^2-21x+5)n}{\sum_{k=2}^5(n+k)}+{\cal O}\left(\frac{1}{n^4}\right) $;
	\item [iii)] $\tilde{D}_n^{M,3}((t-x)^5;x)=\displaystyle \frac{180(1-2x)x^2(1-x)^2n^2}{\sum_{k=2}^6(n+k)}+{\cal O}\left(\frac{1}{n^4}\right)$;
	\item [v)] $\tilde{D}_n^{M,3}((t-x)^6;x)=\displaystyle\frac{120x^3(1-x)^3n^3}{\sum_{k=2}^{7}(n+k)}+{\cal O}\left(\frac{1}{n^4}\right);  $
	\item [iv)] $\tilde{D}_n^{M,3}((t-x)^7;x)=\tilde{D}_n^{M,3}((t-x)^8;x)=\displaystyle{\cal O} \left(\frac{1}{n^4}\right); $
	\item[v)] $\tilde{D}_n^{M,3}((t-x)^9;x)=\tilde{D}_n^{M,3}((t-x)^{10};x)=\displaystyle{\cal O} \left(\frac{1}{n^5}\right).$
\end{itemize}
\end{lemma}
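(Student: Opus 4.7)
My plan is to compute the ordinary moments $\tilde{D}_n^{M,3}(e_j;x)$ for $j=0,1,\ldots,10$ and then assemble the central moments through the binomial expansion $(t-x)^m=\sum_{j=0}^{m}\binom{m}{j}(-x)^{m-j}t^{j}$, exactly as was done for $D_n^{M,1}$ and $\tilde{D}_n^{M,2}$ in the previous sections. The choice of sequences $\tilde b_i(n),\tilde d_i(n),\tilde e_0(n)$ is engineered precisely so that the first three central moments vanish identically, and items (ii)--(vi) will then come out after tracking which leading powers of $n$ survive cancellation.

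For the computation of $\tilde{D}_n^{M,3}(e_j;x)$, I would first record the standard Beta integral
\[
(n+1)\int_{0}^{1}p_{n,k}(t)\,t^{j}\,dt=\frac{(k+1)(k+2)\cdots(k+j)}{(n+2)(n+3)\cdots(n+j+1)},
\]
and then decompose the operator into five sums according to the five shifts in the definition of $p_{n,k}^{M,3}$: write $\tilde{D}_n^{M,3}(e_j;x)=\sum_{i=0}^{4}c_i(x,n)\,A_i(j,x)$, where $c_0=\tilde b(x,n)$, $c_1=\tilde d(x,n)$, $c_2=\tilde e(x,n)$, $c_3=\tilde d(1-x,n)$, $c_4=\tilde b(1-x,n)$, and
\[
A_i(j,x)=\sum_{k}p_{n-4,k-i}(x)\cdot\frac{(k+1)\cdots(k+j)}{(n+2)\cdots(n+j+1)}.
\]
A change of index $\ell=k-i$ turns each $A_i(j,x)$ into a linear combination of $\sum_\ell p_{n-4,\ell}(x)\,\ell^{m}$ with $m\le j$, each of which is a known polynomial in $x$ with coefficients polynomial in $n$. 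Multiplying by $c_i(x,n)$ (which is degree $\le 4$ in $x$ and quadratic in $n$) and summing produces closed forms for the ordinary moments.

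For part~(i), after collecting these closed forms for $j=0,1,2,3$, the combination $\sum_{j}\binom{m}{j}(-x)^{m-j}\tilde{D}_n^{M,3}(e_j;x)$ is computed for $m=1,2,3$; the specific values $\tilde b_0=\tfrac{10}{3}$, $\tilde b_1=-\tfrac{53}{3}-\tfrac{11}{6}n,\ldots$ are designed so that every term cancels, and the verification is simply a polynomial identity in $x$ and $n$. For parts (ii)--(vi) I would retain only the leading terms in $n$ of each contribution: each $A_i(j,x)$ contributes a rational function with numerator of degree $j$ in $n$ and denominator $\prod_{k=2}^{j+1}(n+k)$, so after cancellation of the lower-order central moments the leading behaviour of $\tilde{D}_n^{M,3}((t-x)^{m};x)$ is governed by the highest power of $n$ that still survives. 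Expanding these to one order gives the explicit leading term and the $\mathcal{O}(1/n^{\text{rest}})$ remainder claimed in each item.

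The main obstacle is purely bookkeeping: the polynomials $\tilde b,\tilde d$ are quartic in $x$ with quadratic-in-$n$ coefficients, and one needs moments up to order $10$ in $t$, so the symbolic algebra is enormous and effectively requires a computer algebra system. The only genuine ``mathematical'' content is the deliberate annihilation in item (i) and the verification that the highest powers of $n$ in the numerator of each $\tilde{D}_n^{M,3}((t-x)^{m};x)$ match those written in items (ii)--(vi); once the moments $\tilde{D}_n^{M,3}(e_j;x)$ are on paper, all six items reduce to reading off coefficients.
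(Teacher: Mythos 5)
Your proposal is correct and is essentially the route the paper itself (implicitly) takes: the lemma is stated as the outcome of a direct symbolic computation, and your plan — the Beta-integral identity $(n+1)\int_0^1 p_{n,k}(t)t^j\,dt=\frac{(k+1)\cdots(k+j)}{(n+2)\cdots(n+j+1)}$, index shifts reducing everything to binomial power sums $\sum_\ell p_{n-4,\ell}(x)\ell^m$, multiplication by the quartic-in-$x$ coefficient polynomials, and binomial assembly of the central moments with the designed cancellations in item (i) — is exactly that computation, carried out (as you note) most realistically with a computer algebra system.
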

The asymptotic order of approximation of $\tilde{D}_n^{M,3}$ to $f$ when $n$ goes to infinity is given in the following result:

\begin{theorem}
If $f\in C^{10}[0,1]$ and $x\in [0,1]$, then for sufficiently large $n$, we have
$$ \tilde{D}_n^{M,3}(f;x)-f(x)={\cal O}\left(\displaystyle\frac{1}{n^3}\right). $$
\end{theorem}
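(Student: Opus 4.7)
The plan is to mirror the proof of the $\mathcal{O}(1/n^2)$ theorem for $\tilde{D}_n^{M,2}$, but with the Taylor expansion of order $10$ in order to exploit the vanishing of the central moments of orders $1,2,3$ together with the fact, recorded in the preceding lemma, that all central moments of order $\ge 4$ decay as $\mathcal{O}(1/n^3)$ or faster. I would write
$$f(t)=\sum_{k=0}^{10}\frac{f^{(k)}(x)}{k!}(t-x)^{k}+\theta(t,x)(t-x)^{10},$$
with $\theta(\cdot,x)\in C[0,1]$ and $\lim_{t\to x}\theta(t,x)=0$, and apply the linear operator $\tilde{D}_n^{M,3}$ to both sides. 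Because the first three central moments vanish, the $k=4,5,6$ moments are of order $1/n^3$, and the $k=7,\dots,10$ moments are $\mathcal{O}(1/n^4)$ or smaller, the polynomial part of the resulting expansion is immediately $\mathcal{O}(1/n^3)$, and everything reduces to controlling the remainder $R_n(x):=\tilde{D}_n^{M,3}\bigl(\theta(t,x)(t-x)^{10};x\bigr)$.

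The main obstacle is that $\tilde{D}_n^{M,3}$ is not positive, so the absolute value cannot be passed through the operator directly. I would use the strategy from the $\tilde{D}_n^{M,2}$-proof and decompose $R_n(x)$ into the five pieces corresponding to the five Bernstein kernels $p_{n-4,k-j}$ ($j=0,1,2,3,4$) appearing in $p_{n,k}^{M,3}$. Since $\tilde{b}_i(n), \tilde{d}_i(n), \tilde{e}_0(n)$ are all $\mathcal{O}(n^2)$, each of $|\tilde{b}(\cdot,n)|,|\tilde{d}(\cdot,n)|,|\tilde{e}(\cdot,n)|$ is bounded on $[0,1]$ by a constant multiple of $n^2$, so each piece carries an overall prefactor $(n+1)\cdot\mathcal{O}(n^2)=\mathcal{O}(n^3)$.

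Given $\varepsilon>0$, choose $\delta>0$ such that $|t-x|<\delta$ implies $|\theta(t,x)|<\varepsilon$, and split $\{0,\dots,n\}=\mathcal{K}_1\cup\mathcal{K}_2$ with $\mathcal{K}_1=\{k:|k/n-x|<\delta\}$, exactly as in the earlier proof. On $\mathcal{K}_1$ replace $|\theta|$ by $\varepsilon$; each piece is then bounded by
$$\varepsilon\cdot\mathcal{O}(n^{3})\cdot\sum_{k}p_{n-4,k-j}(x)\int_{0}^{1}p_{n,k}(t)(t-x)^{10}\,dt,$$
and the building-block sum is $\mathcal{O}(1/n^{6})$ by the same type of polynomial integration that produced the entries in the preceding lemma (this reflects the standard Bernstein--Durrmeyer fact that $(2r)$-th moment sums decay like $1/n^{r+1}$ with $r=5$), giving an $\mathcal{O}(\varepsilon/n^{3})$ contribution per piece. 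On $\mathcal{K}_2$, let $M=\sup_{t\in[0,1]}|\theta(t,x)|(t-x)^{10}$, use the Bernstein trick $1\le\delta^{-10}(k/n-x)^{10}$, absorb $\int_{0}^{1}p_{n,k}(t)\,dt=1/(n+1)$, and invoke the classical estimate $\sum_{k}p_{n-4,k-j}(x)(k/n-x)^{10}=\mathcal{O}(1/n^{5})$ to obtain a contribution of size $\mathcal{O}(n^{3}\cdot n^{-1}\cdot n^{-5})=\mathcal{O}(1/n^{3})$. Summing the (at most five) pieces yields $R_n(x)=\mathcal{O}(1/n^{3})$, and combined with the polynomial estimate this completes the proof. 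The bookkeeping of the various $n$-powers (the $\mathcal{O}(n^3)$ blow-up of the coefficients and the compensating $\mathcal{O}(1/n^{6})$ and $\mathcal{O}(1/n^{5})$ decays of the building blocks) is the only delicate point and is the step I expect to require the most care.
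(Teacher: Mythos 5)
Your proposal is correct: the paper states this theorem without proof, and the intended argument is exactly the one you outline, namely the direct analogue of the proof given for $\tilde{D}_n^{M,2}$ — a tenth-order Taylor expansion whose polynomial part is $\mathcal{O}(1/n^3)$ by the central-moment lemma, plus the $\mathcal{K}_1/\mathcal{K}_2$ splitting of the remainder using the $\mathcal{O}(n^2)$ bound on the coefficient polynomials $\tilde{b},\tilde{d},\tilde{e}$, the $\mathcal{O}(n^{-6})$ Durrmeyer-type moment sums, and the $\mathcal{O}(n^{-5})$ Bernstein moment sums. Your power bookkeeping, $(n+1)\cdot\mathcal{O}(n^2)\cdot\mathcal{O}(n^{-6})$ on $\mathcal{K}_1$ and $(n+1)\cdot\mathcal{O}(n^2)\cdot(n+1)^{-1}\cdot\mathcal{O}(n^{-5})$ on $\mathcal{K}_2$, indeed closes to $\mathcal{O}(n^{-3})$.
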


\section{Numerical Results}
In this section
 we will analysis the theoretical results presented in the previous sections by numerical examples.
\begin{example} Let $f(x)=\displaystyle\sin(2\pi x)+2\sin\left(\frac{1}{2}\pi x\right)$, $n=10$, $a_0(n)=\displaystyle\frac{n-1}{2n}$ and $a_1(n)=\displaystyle\frac{1}{n}$.
	The convergence of the new modifications of the Durrmeyer operators is illustrated in Figure \ref{fig:1}.  Let $E_n(f;x)=\left| f(x)-D_n(f;x)\right|$ and $E_n^{M,i}(f;x)=\left| f(x)-\tilde{D}_n^{M,i}(f;x)\right|$, $i=1,2,3$ be the error function of the Durrmeyer operators, respectively the modified Durrmeyer operators. The error of approximation is illustrated in Figure \ref{fig:2} and note that the approximation by the modified Durrmeyer operators $\tilde{D}_n^{M,i}$, $i=1,2,3$ is better then using classical Durrmeyer operator $D_n$.
\end{example}

\begin{minipage}{\linewidth}
	\centering
	\begin{minipage}{0.4\linewidth}
		\begin{figure}[H]
			\includegraphics[width=\linewidth]{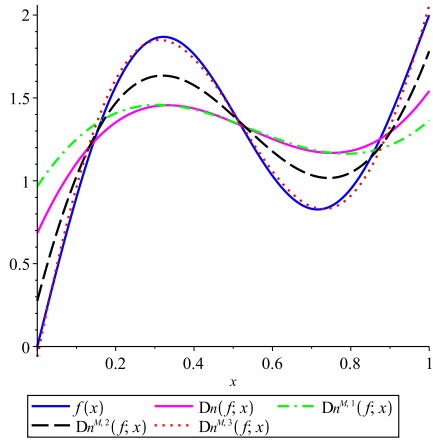}
			\caption{Approximation process}\label{fig:1}
		\end{figure}
	\end{minipage}
	\hspace{0.05\linewidth}
	\begin{minipage}{0.4\linewidth}
		\begin{figure}[H]
			\includegraphics[width=\linewidth]{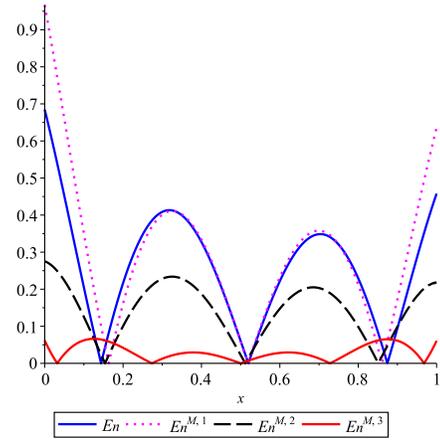}
			\caption{Error of approximation }\label{fig:2}
		\end{figure}
	\end{minipage}
\end{minipage}

\begin{example}
	Let us consider the following function
	$ f(x)=\left| x-\frac{1}{2} \right|\cos(2\pi x). $ It can be observed that $f\in C[0,1]$, but it is not differentiable at the point $x=0.5$. For $n=10$, $a_0(n)=-\displaystyle\frac{n}{2n+1}$ and $a_1(n)=\displaystyle\frac{4n+1}{2n+1}$, the convergence of the modified Durrmeyer operators to  $f(x)$ is illustrated in Figure \ref{fig:3}. Also, the error functions $E_n$ and $E_n^{M,i}$, $i=1,2,3$ are given in Figure \ref{fig:4}.
\end{example}

\begin{minipage}{\linewidth}
	\centering
	\begin{minipage}{0.4\linewidth}
		\begin{figure}[H]
			\includegraphics[width=\linewidth]{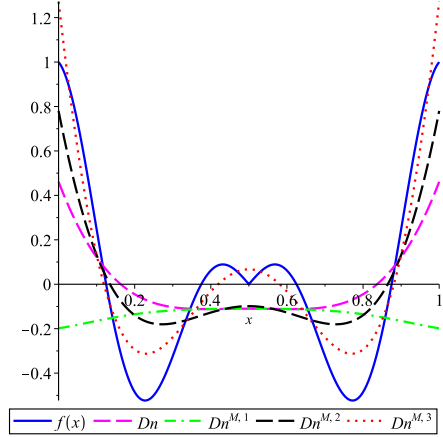}
			\caption{Approximation process}\label{fig:3}
		\end{figure}
	\end{minipage}
	\hspace{0.05\linewidth}
	\begin{minipage}{0.4\linewidth}
		\begin{figure}[H]
			\includegraphics[width=\linewidth]{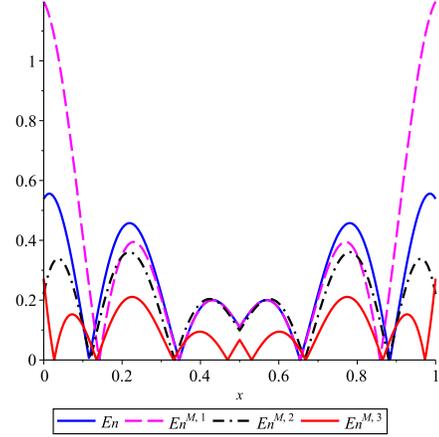}
			\caption{Error of approximation }\label{fig:4}
		\end{figure}
	\end{minipage}
\end{minipage}

\begin{example}
	Let us consider the following function $f(x)=\displaystyle\left(x-\frac{1}{4}\right)\sin(2\pi x)$. The behaviours of the approximations $D_n(f;x)$, $\tilde{D}_n^{M,i}(f;x)$, $i=1,2,3$ and their error functions $E_n(f;x)$, $E_n^{M,i}(f;x)$ for $n=5,10,20$, $a_0(n)=\displaystyle\frac{n-1}{2n}$, $a_1(n)=\displaystyle\frac{1}{n}$ are illustrated in the Figures \ref{fig:5}-\ref{fig:12}.
\end{example}

\begin{minipage}{\linewidth}
	\centering
	\begin{minipage}{0.4\linewidth}
		\begin{figure}[H]
			\includegraphics[width=\linewidth]{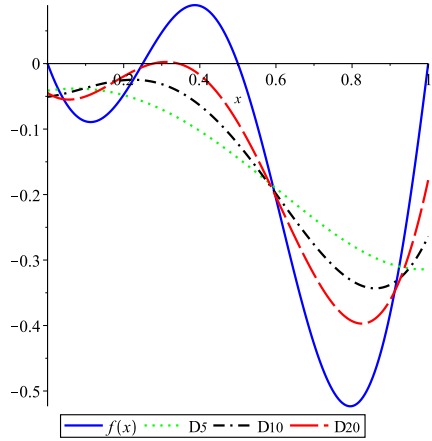}
			\caption{Approximation process of $D_n$}\label{fig:5}
		\end{figure}
	\end{minipage}
	\hspace{0.05\linewidth}
	\begin{minipage}{0.4\linewidth}
		\begin{figure}[H]
			\includegraphics[width=\linewidth]{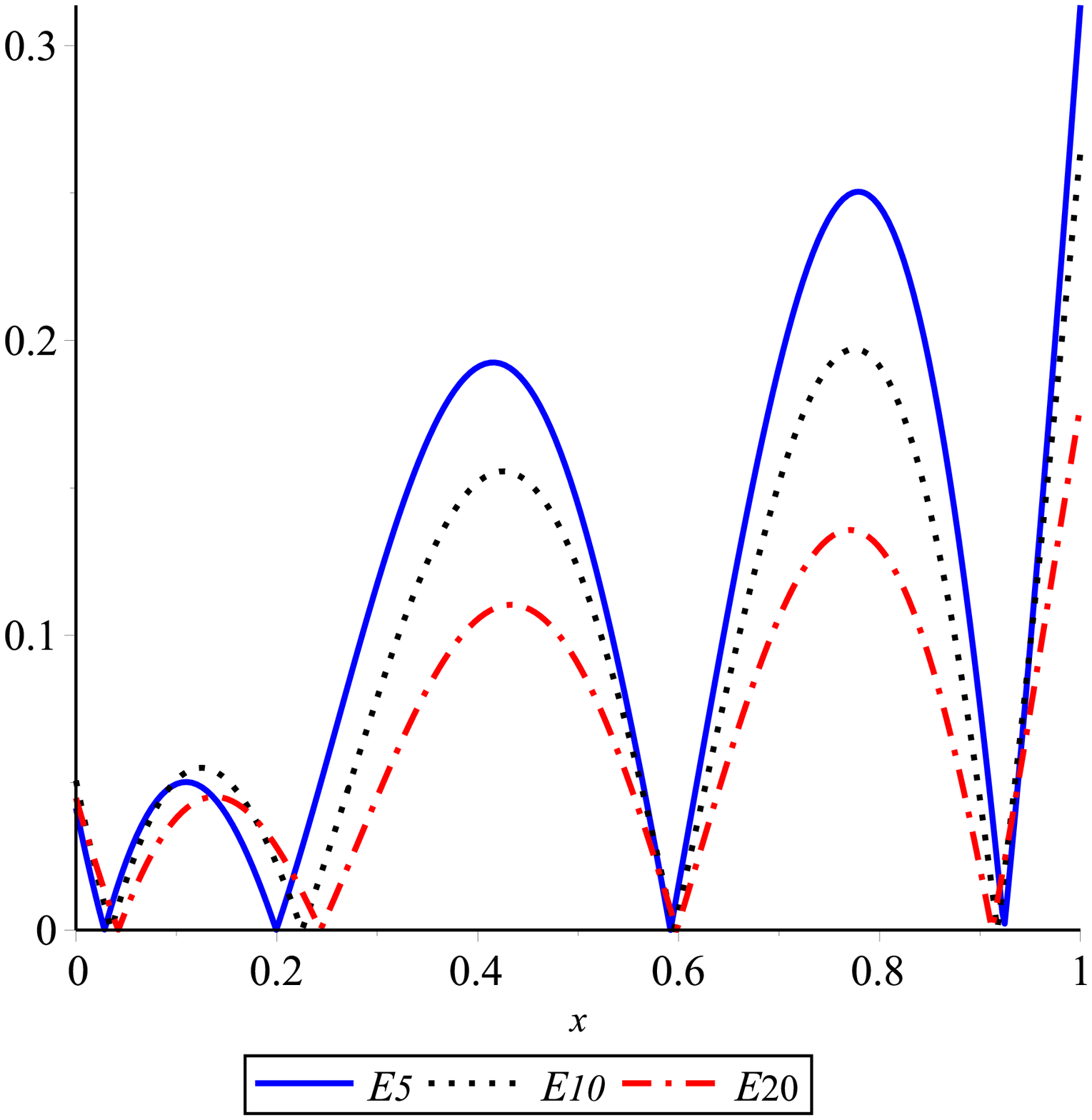}
			\caption{Error of approximation $E_n$ }\label{fig:6}
		\end{figure}
	\end{minipage}
\end{minipage}

\begin{minipage}{\linewidth}
	\centering
	\begin{minipage}{0.4\linewidth}
		\begin{figure}[H]
			\includegraphics[width=\linewidth]{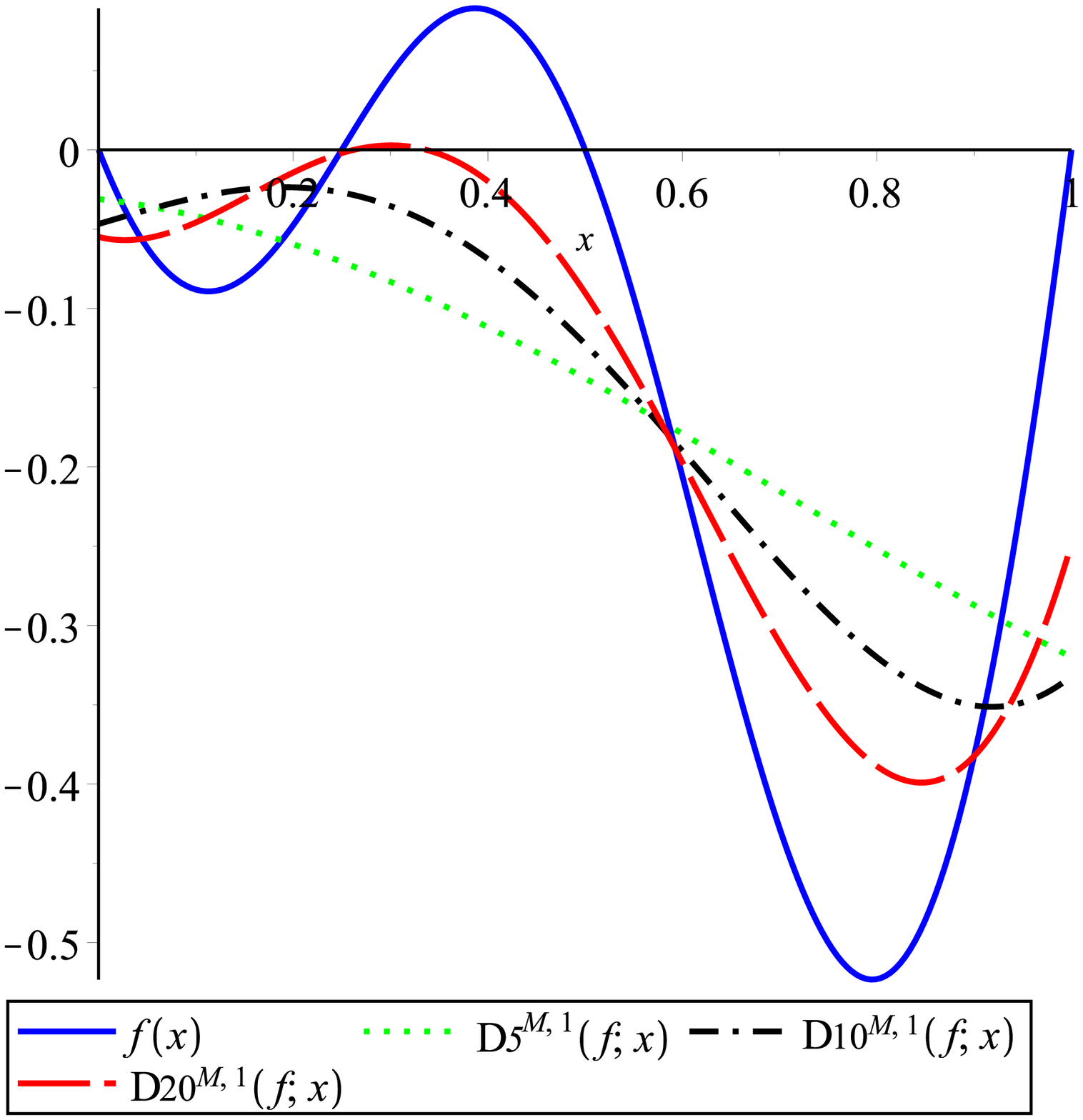}
			\caption{Approximation process of $\tilde{D}_n^{M,1}$}\label{fig:7}
		\end{figure}
	\end{minipage}
	\hspace{0.05\linewidth}
	\begin{minipage}{0.4\linewidth}
		\begin{figure}[H]
			\includegraphics[width=\linewidth]{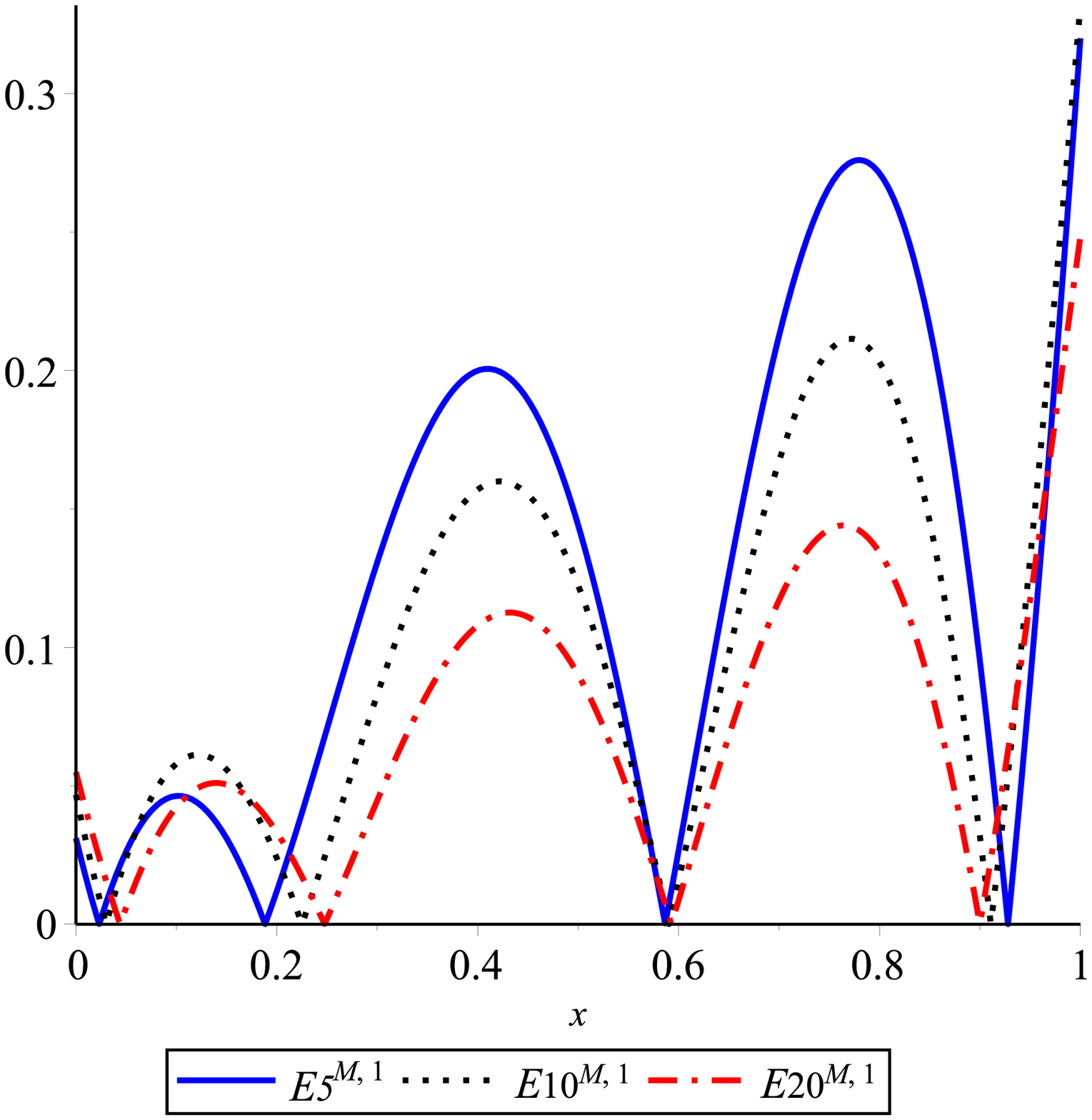}
			\caption{Error of approximation $E_n^{M,1}$ }\label{fig:8}
		\end{figure}
	\end{minipage}
\end{minipage}

\begin{minipage}{\linewidth}
	\centering
	\begin{minipage}{0.4\linewidth}
		\begin{figure}[H]
			\includegraphics[width=\linewidth]{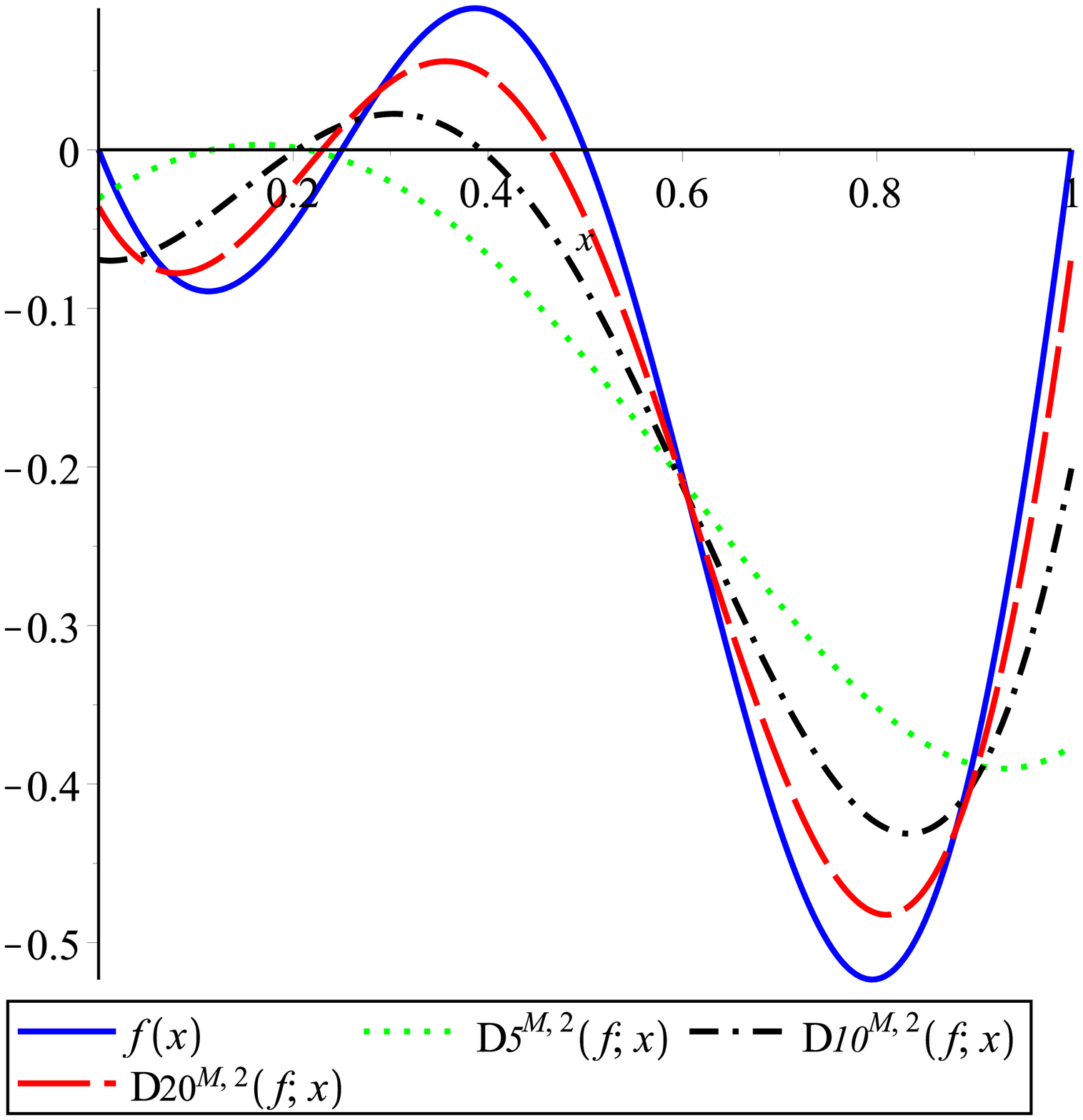}
			\caption{Approximation process of $\tilde{D}_n^{M,2}$}\label{fig:9}
		\end{figure}
	\end{minipage}
	\hspace{0.05\linewidth}
	\begin{minipage}{0.4\linewidth}
		\begin{figure}[H]
			\includegraphics[width=\linewidth]{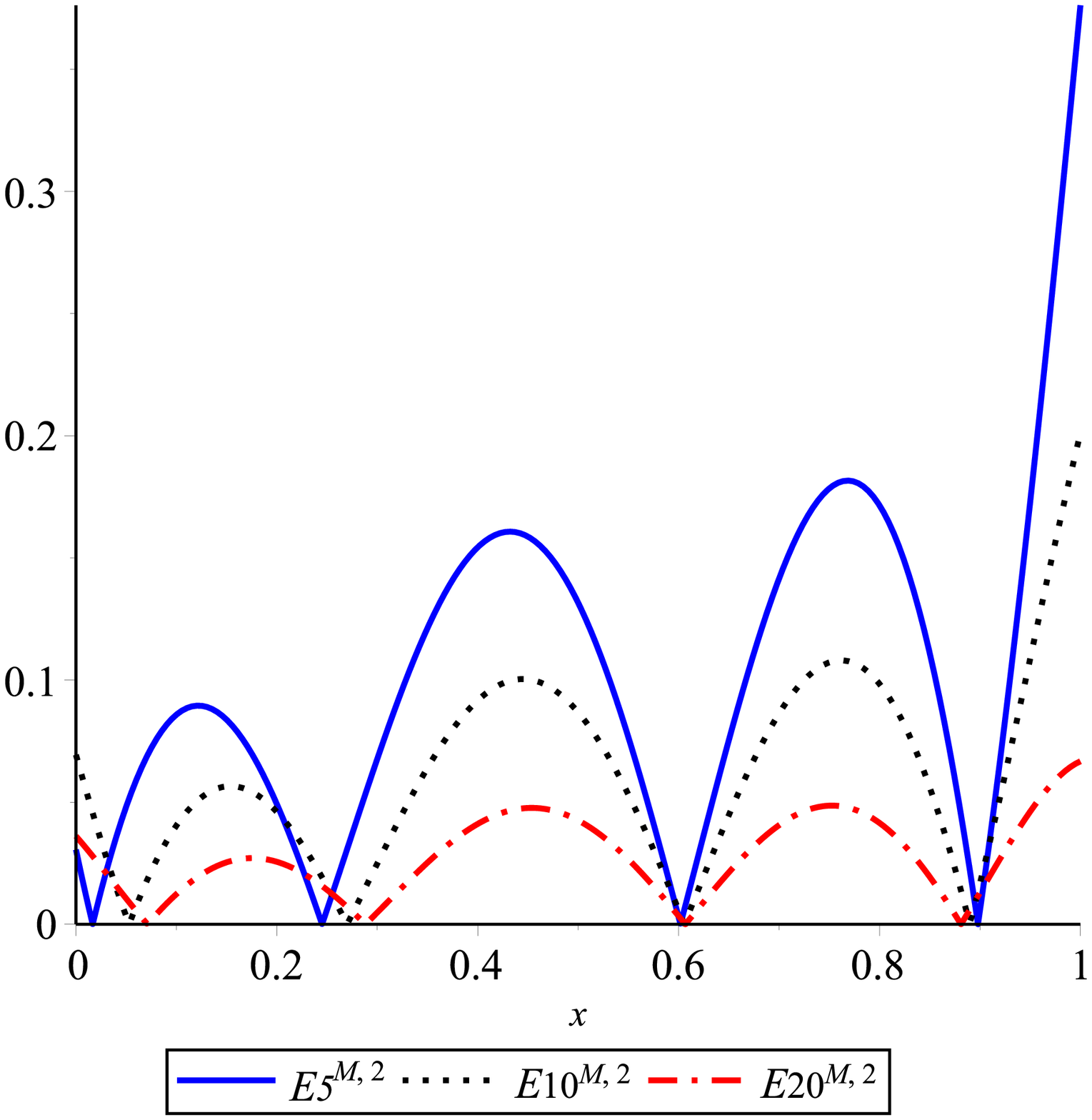}
			\caption{Error of approximation $E_n^{M,2}$ }\label{fig:10}
		\end{figure}
	\end{minipage}
\end{minipage}

\begin{minipage}{\linewidth}
	\centering
	\begin{minipage}{0.4\linewidth}
		\begin{figure}[H]
			\includegraphics[width=\linewidth]{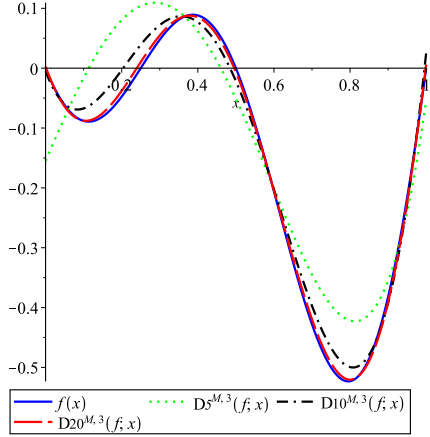}
			\caption{Approximation process $\tilde{D}_n^{M,3}$}\label{fig:11}
		\end{figure}
	\end{minipage}
	\hspace{0.05\linewidth}
	\begin{minipage}{0.40\linewidth}
		\begin{figure}[H]
			\includegraphics[width=\linewidth]{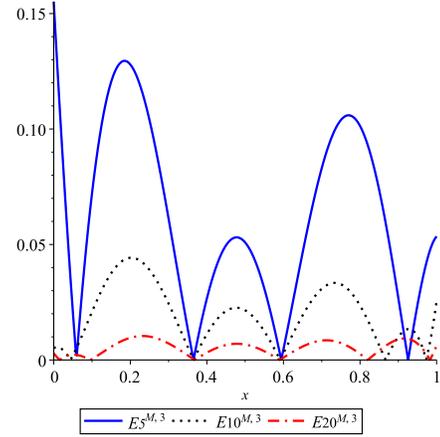}
			\caption{Error of approximation $E_n^{M,3}$ }\label{fig:12}
		\end{figure}
	\end{minipage}
\end{minipage}

$$    $$

\noindent{\bf Acknowledgements.} Project financed from Lucian Blaga University of Sibiu research grants LBUS-IRG-2018-04.

$   $

\noindent{\bf References}


\begin{thebibliography}{99}
	\bibitem{1} H. Khosravian-Arab, M. Dehghan, M. R. Eslahchi, {\it A new approach to improve the order of approximation of the Bernstein
	operators: Theory and applications}, Numerical Algorithms 77(1) (2018), 111-150.
	
\bibitem{Bernstein}  S.N. Bernstein, {\it D\'{e}monstration du th\'{e}or\`{e}me de Weierstrass fond\'{e}e
	sur le calcul des probabilit\'{e}s}, Communications de la
Soci\'{e}t\'{e} Mathematique de Kharkov, 13, 1913, 1--2.
\bibitem{bust} J. Bustamante, {\it  Bernstein
Operators and Their Properties}, Birkh\"{a}user, Basel, 2017.


\bibitem{8r3}  M.M. Derriennic, {\it Sur lâ approximation des fonctions int\'egrables par des polyn\^omes de Bernstein
modifi\'es}, J. Approx. Theory 31 (1981), 325--343.

\bibitem{10r3}  Z. Ditzian, {\it  Multidimensional Jacobi-type Bernstein-Durrmeyer operators}, Acta. Sci. 60
(1995), 225--243.

\bibitem{1a}  J.L. Durrmeyer, {\it Une formule d'inversion de la transforme de Laplace: Applications a la theorie des moments},
These de 3e cycle, Paris, (1967).

\bibitem{11r3} H. Esser, {\it On pointwise convergence estimates for positive linear operators on $C[a,b]$}, Indag.
Math. 38 (1976), 189-194.

\bibitem{Gavrea} I. Gavrea, M. Ivan, {\it An answer to a conjecture on Bernstein operators}, J. Math. Anal. Appl. 390 (1) (2012), 86--92.

\bibitem{2} H. Gonska, I. Rasa, {\it Asymptotic behavior of differentiated Bernstein polynomials}, Mat. Vesnik 61 (1) (2009), 53--60.

\bibitem{3} H. Gonska, {\it On the degree of approximation in Voronovskaja's theorem},  Studia Univ. Babe\c{s} Bolyai Math. 52 (3) (2007), 103--115.

\bibitem{21r2} H. Gonska, {\it Two problems on best constants in direct estimates}, (Problem Section of Proc.
Edmonton Conf. Approximation Theory, ed. by Z. Ditzian et al.), 194 Providence, RI. Amer.
Math. Soc. (1983).

\bibitem{27r2}   H. Gonska, D. Zhou, On an extremal problem concerning Bernstein operators, Serdica Math.
J. (1995), 137--150.


\bibitem{G} H. Gonska, {\it Quantitative Aussagen zur Approximation durch positive lineare Operatoren}, Ph. D. Thesis, Duisburg, Universit at Duisburg, 1979.

\bibitem{G1}  V. Gupta and G. Tachev, {\it Approximation with Positive Linear Operators and Linear Combinations}, Springer, Cham, 2017.

\bibitem{G2}  V. Gupta and R. P. Agarwal, {\it Convergence Estimates in Approximation Theory}, Springer, Cham, 2014.
\bibitem{22r3}  K. G. Ivanov, {\it On Bernstein polynomials}, C.R. Acad. Bulg. 35 (1982), 893--896.



\bibitem{30r2}  D. P. Kacs\'o, Discrete Jackson type operators via a Boolean sum approach, J. Comp. Anal.
Appl., 3 (2001), 399-413.

\bibitem{32r3}  G. G. Lorentz, {\it Bernstein polynomials}, Toronto: Univ. Press 1953.

\bibitem{2a}  A. Lupa\c s,{\it Die Folge der Betaoperatoren, Dissertation}, Universit\"ait Stuttgart, (1972).

\bibitem{50r2}   R. P\u alt\u anea, {\it Approximation by linear positive operators: Estimates with second order moduli},
Bra\c sov: Ed. Univ. Transilvania 2003.

\bibitem{P43} T. Popoviciu, {\it Sur l'approximation des fonctions covexes d'ordre sup\'{e}rieur}, Mathematica (Cluj)
10 (1934), 49–54.

\bibitem{P44} T. Popoviciu, {\it Sur l'approximation des fonctions continues d'une variable r\'{e}elle par des polynomes},
Ann. Sci. Univ. Ia¸si, Sect. I, Math 28 (1942), 208.

\bibitem{58r3}  P. C. Sikkema, {\it Der Wert einiger Konstanten in der Theorie der Approximation mit Bernsteinâ
	Polynomen}, Num. Math. 3 (1961), 107-116.

\bibitem{5} G. Tachev, {\it The complete asymptotic expansion for Bernstein operators}, J. Math. Anal. Appl. 385 (2) (2012), 1179--
1183.

\bibitem{T9}  E. Voronovskaja, {\it  Determination de la forme asymptotique d'approximation des fonctions par les polynomes de M. Bernstein}, Dokl. Akad. Nauk SSSR 4 (1932), 86--92.

\end{thebibliography}
\end{document}